\documentclass[preprint,12pt]{elsarticle}

\usepackage{amssymb}
\usepackage{amsmath}
\usepackage{amsthm}

\usepackage{paralist}
\usepackage{tikz-cd}
\usepackage{hyperref}
\usepackage{tikz}
\usetikzlibrary{positioning,arrows.meta,calc,shapes.geometric,fit,backgrounds}

\theoremstyle{definition}
\newtheorem{theorem}{Theorem}[section]
\newtheorem{proposition}[theorem]{Proposition}
\newtheorem{corollary}[theorem]{Corollary}
\newtheorem{example}[theorem]{Example}
\newtheorem{definition}[theorem]{Definition}
\newtheorem{lemma}[theorem]{Lemma}
\newtheorem{definition-proposition}[theorem]{Definition-Proposition}
\newtheorem{remark}[theorem]{Remark}

\journal{Journal of Abstract Nonsense}

\newcommand{\rk}{\mathrm{rk}\,}
\newcommand{\ran}{\mathrm{ran}\,}

\newcommand{\llangle}{\left\langle\!\left\langle}
\newcommand{\rrangle}{\right\rangle\!\right\rangle}
\newcommand{\bbot}{{\bot\!\!\!\bot}}

\newcommand{\set}[1]{\left\lbrace #1\right\rbrace}

\newcommand{\norm}[1]{\left\Vert #1\right\Vert}

\renewcommand{\exp}[1]{{\mathrm{e}^{#1}}}

\newcommand{\simto}{\xrightarrow{\raisebox{-0.7ex}[0ex][0ex]{$\sim$}}} 

\begin{document}

\numberwithin{equation}{section}

\begin{frontmatter}



\title{Interconnection of port-Hamiltonian systems is not dynamical}


\author[1]{Jonas Kirchhoff} 

\affiliation[1]{organization={Institut für Mathematik, Martin-Luther-Universität Halle-Wittenberg},
            addressline={Theodor-Lieser-Stra\ss e 5}, 
            city={Halle (Saale)},
            postcode={06120}, 
            state={Sachsen-Anhalt},
            country={Germany}}

\begin{abstract}
The interconnection of port-Hamiltonian systems is usually understood as structure-preserving due to the geometric fact that the composition of the underlying Dirac structures produces again a Dirac structure. Pointwise, this is doubtless true, but when comparing dynamical systems, their trajectories are usually compared, and trajectories usually obey some regularity assumptions. It is shown that both classical and weak solutions do not necessarily exhibit a one-to-one correspondence between solutions of the interconnected port-Hamiltonian system, and the dynamical system realising the interconnection. In the mostly studied cases of constant Dirac structures or port-Hamiltonian ODE systems, both systems are behaviourally equivalent; for non-constant Dirac structures, a sufficient condition is given.
\end{abstract}

\begin{keyword}
port-Hamiltonian systems \sep interconnection \sep Dirac structures



\end{keyword}

\end{frontmatter}




\section{Introduction}

Port-Hamiltonian systems are widely used for modelling and control of all sorts of things. One key feature of port-Hamiltonian systems is the structure-preserving interconnection: Given a number of port-Hamiltonian systems, one can impose a class of feedback laws on (parts of) the external ports so that the closed loop is again a port-Hamiltonian system. This property makes port-Hamiltonian systems attractive for automatic modelling of complex systems, where the total system is comprised of interacting simpler partial systems, each of which and their interactions can be modelled in a port-Hamiltonian way. The paradigmatic examples of such systems are networks of components, e.g. electrical circuits, robot arms, or gas-networks.

\begin{figure}
\centering
\begin{tikzpicture}[
  >={Stealth[length=2.4mm]},
  block/.style={draw, thick, ellipse, align=center, inner sep=2pt,
                minimum height=14mm, minimum width=20mm},
  dirac/.style={draw, thick, ellipse, align=center, inner sep=2pt, 
  				minimum height=14mm, minimum width=20mm},
  port/.style={font=\footnotesize},
  ann/.style={font=\scriptsize, align=center},
  d/.style={4mm}                      
]

\node[dirac] (D) {Dirac\\structure\\$\mathcal{D}$};

\node[block, left=10mm of D]  (S) {energy reservoir};
\node[block, right=10mm of D] (R) {resistive port};
\node[below=7.5mm of D] (P) {};

\draw[->] ($(D.west)+(0, 2mm)$) -- node[port,above]{$f_S$} ($(S.east)+(0, 2mm)$);
\draw[<-] ($(D.west)+(0,-2mm)$) -- node[port,below]{$e_S$} ($(S.east)+(0,-2mm)$);

\draw[->] ($(D.east)+(0, 2mm)$) -- node[port,above]{$f_R$} ($(R.west)+(0, 2mm)$);
\draw[<-] ($(D.east)+(0,-2mm)$) -- node[port,below]{$e_R$} ($(R.west)+(0,-2mm)$);

\draw[->] ($(D.south)+(-2mm,0)$) -- node[port,left] {$f_P$} ($(P.north)+(-2mm,0)$);
\draw[<-] ($(D.south)+( 2mm,0)$) -- node[port,right]{$e_P$} ($(P.north)+(2mm,0)$);
\end{tikzpicture}
\caption{Schematic representation of port-Hamiltonian systems}
\label{fig:moin}
\end{figure}

Port-Hamiltonian systems are modelled by Dirac structures, which describe the lossless routing of power within the system. The power is routed between the energy reservoir of the system, the resistive port, and the external port, which is diagrammatically represented in Figure~\ref{fig:moin}. The precise mathematical definitions port-Hamiltonian systems and their geometrical data are recalled in Section~\ref{sec:pH_repetition}. Structure preserving interconnection of port-Hamiltonian systems is diagrammatically described in Figure~\ref{fig:interconnection}, where we have suppressed possibly remaining external ports.
\begin{figure}[!ht]
\centering
\begin{tikzpicture}[
  >={Stealth[length=2.4mm]},
  block/.style={draw, thick, ellipse, align=center, inner sep=2pt,
                minimum height=14mm, minimum width=20mm},
  dirac/.style={draw, thick, ellipse, align=center, inner sep=2pt, 
  				minimum height=14mm, minimum width=20mm},
  port/.style={font=\footnotesize},
  ann/.style={font=\scriptsize, align=center},
  d/.style={4mm}                      
]

\node[dirac] (D) {Dirac\\structure\\$\mathcal{D}$};
\node[block, left=10mm of D]  (S) {energy reservoir};
\node[block, right=10mm of D] (R) {resistive port};
\node[dirac, below=7.5mm of D] (Dint) {Dirac\\structure\\$\mathcal{D}_{\mathrm{int}}$};
\node[dirac, below=7.5mm of Dint] (D2) {Dirac\\structure\\$\mathcal{D}'$};
\node[block, left=10mm of D2]  (S2) {energy reservoir};
\node[block, right=10mm of D2] (R2) {resistive port};

\draw[->] ($(D.west)+(0, 2mm)$) -- node[port,above]{$f_S$} ($(S.east)+(0, 2mm)$);
\draw[<-] ($(D.west)+(0,-2mm)$) -- node[port,below]{$e_S$} ($(S.east)+(0,-2mm)$);
\draw[->] ($(D2.west)+(0, 2mm)$) -- node[port,above]{$f_S'$} ($(S2.east)+(0, 2mm)$);
\draw[<-] ($(D2.west)+(0,-2mm)$) -- node[port,below]{$e_S'$} ($(S2.east)+(0,-2mm)$);

\draw[->] ($(D.east)+(0, 2mm)$) -- node[port,above]{$f_R$} ($(R.west)+(0, 2mm)$);
\draw[<-] ($(D.east)+(0,-2mm)$) -- node[port,below]{$e_R$} ($(R.west)+(0,-2mm)$);
\draw[->] ($(D2.east)+(0, 2mm)$) -- node[port,above]{$f_R'$} ($(R2.west)+(0, 2mm)$);
\draw[<-] ($(D2.east)+(0,-2mm)$) -- node[port,below]{$e_R'$} ($(R2.west)+(0,-2mm)$);

\draw[->] ($(D.south)+(-2mm,0)$) -- node[port,left] {$f_P$} ($(Dint.north)+(-2mm,0)$);
\draw[<-] ($(D.south)+( 2mm,0)$) -- node[port,right]{$e_P$} ($(Dint.north)+(2mm,0)$);
\draw[->] ($(Dint.south)+(-2mm,0)$) -- node[port,left] {$f_P'$} ($(D2.north)+(-2mm,0)$);
\draw[<-] ($(Dint.south)+( 2mm,0)$) -- node[port,right]{$e_P'$} ($(D2.north)+(2mm,0)$);
\end{tikzpicture}
\caption{Interconnection of port-Hamiltonian systems}
\label{fig:interconnection}
\end{figure}
The structure-preservation of the interconnection is given by the geometric fact that the algebraic relation of the external port-variables by the Dirac structure $\mathcal{D}_{\mathrm{int}}$ is again a Dirac structure, cf.~\cite{Interconnection07,Interconnection18}, schematically represented in Figure~\ref{fig:interconnected_dirac_structure}.
\begin{figure}
\begin{tikzpicture}[
  >={Stealth[length=2.4mm]},
  dirac/.style={draw, thick, ellipse, align=center, inner sep=2pt, dashed},
  port/.style={font=\footnotesize},
  ann/.style={font=\scriptsize, align=center},
  d/.style={4mm}                      
]

\node[dirac] (D) {Dirac\\structure\\$\mathcal{D}$};
\node[dirac, right=7.5mm of D] (Dint) {Dirac\\structure\\$\mathcal{D}_{\mathrm{int}}$};
\node[dirac, right=7.5mm of Dint] (D2) {Dirac\\structure\\$\mathcal{D}'$};
\node[left=20mm of D] (X) {};
\node[right=20mm of D2] (Y) {};

\draw[thick] (Dint) ellipse [x radius=45.5mm, y radius=21mm];
\node[below=2.5mm of Dint, font=\bfseries] (DD) {$\mathcal{D}\Vert_{\mathcal{D}_{\mathrm{int}}}\mathcal{D}'$};
\node[above=2.5mm of Dint, font=\bfseries] (DD) {Dirac structure};

\draw[->] ($(D.west)+(0, 2mm)$) -- node[port,above]{$(f_S,f_R)$} ($(X.east)+(0, 2mm)$);
\draw[<-] ($(D.west)+(0,-2mm)$) -- node[port,below]{$(e_S,e_R)$} ($(X.east)+(0,-2mm)$);
\draw[->] ($(Dint.west)+(0, 2mm)$) -- node[port,above]{$f_P$} ($(D.east)+(0, 2mm)$);
\draw[<-] ($(Dint.west)+(0,-2mm)$) -- node[port,below]{$e_P$} ($(D.east)+(0,-2mm)$);
\draw[->] ($(D2.west)+(0, 2mm)$) -- node[port,above]{$f_P'$} ($(Dint.east)+(0, 2mm)$);
\draw[<-] ($(D2.west)+(0,-2mm)$) -- node[port,below]{$e_P'$} ($(Dint.east)+(0,-2mm)$);
\draw[->] ($(D2.east)+(0, 2mm)$) -- node[port,above]{$(f_S',f_R')$} ($(Y.west)+(0, 2mm)$);
\draw[<-] ($(D2.east)+(0,-2mm)$) -- node[port,below]{$(e_S',e_R')$} ($(Y.west)+(0,-2mm)$);
\end{tikzpicture}
\caption{Interconnected Dirac structure}
\label{fig:interconnected_dirac_structure}
\end{figure}
In the interconnected Dirac structure $\mathcal{D}\Vert_{\mathcal{D}_{\mathrm{int}}}\mathcal{D}'$, the external port-variables $(f_P,e_p)$ and $(f_P',e_P')$ do not appear explicitly, and the interconnected system is usually written in the variables $(f_S,f_S',e_S,e_S')$ as the port-Hamiltonian system sketched in Figure~\ref{fig:interconnected_pH}.
\begin{figure}[!ht]
\centering
\begin{tikzpicture}[
  >={Stealth[length=2.4mm]},
  block/.style={draw, thick, ellipse, align=center, inner sep=2pt,
                minimum height=14mm, minimum width=20mm},
  dirac/.style={draw, thick, ellipse, align=center, inner sep=2pt, 
  				minimum height=14mm, minimum width=20mm},
  port/.style={font=\footnotesize},
  ann/.style={font=\scriptsize, align=center},
  d/.style={4mm}                      
]

\node[dirac] (D) {Dirac\\structure\\$\mathcal{D}\Vert_{\mathcal{D}_{\mathrm{int}}}\mathcal{D}'$};

\node[block, left=10mm of D]  (S) {energy reservoir};
\node[block, right=10mm of D] (R) {resistive port};

\draw[->] ($(D.west)+(0, 2mm)$) -- node[port,above]{$f_S$} ($(S.east)+(0, 2mm)$);
\draw[<-] ($(D.west)+(0,-2mm)$) -- node[port,below]{$e_S$} ($(S.east)+(0,-2mm)$);

\draw[->] ($(D.east)+(0, 2mm)$) -- node[port,above]{$f_R$} ($(R.west)+(0, 2mm)$);
\draw[<-] ($(D.east)+(0,-2mm)$) -- node[port,below]{$e_R$} ($(R.west)+(0,-2mm)$);
\end{tikzpicture}
\caption{Interconnected port-Hamiltonian systems}
\label{fig:interconnected_pH}
\end{figure}
The equivalence of the systems represented by Figure~\ref{fig:interconnection} and Figure~\ref{fig:interconnected_pH} is expressed by the algebraic equivalence: For variables $(f_S,f_S',f_R,f_R',e_S,e_S',e_R,e_R')$, there exist pairs of port-variables $(f_P,e_p)$ and $(f_P',e_P')$ satisfying the relations sketched in Figure~\ref{fig:interconnection} if, and only if, $(f_S,f_S',f_R,f_R',e_S,e_S',e_R,e_R')$ satisfies those of Figure~\ref{fig:interconnected_pH}.

For dynamical systems, however, equivalence is usually not on the level of their variables alone, but instead expressed as a one-to-one correspondence of the trajectories of the systems, which are curves of the variables with certain regularity assumptions. Since port-Hamiltonian systems are dynamical systems, their interconnection should be studied from the dynamical systems' point of view. Then however, it is \textit{a priori} not guaranteed that, given a \textit{regular} curve of variables $(f_S,f_S',f_R,f_R',e_S,e_S',e_R,e_R')$ solving the \textit{interconnected system} of Figure~\ref{fig:interconnected_pH}, there are \textit{regular} curves of port-variables $(f_P,e_p)$ and $(f_P',e_P')$ so that the \textit{interconnection system} of Figure~\ref{fig:interconnection} is solved.

In this note, we demonstrate that there are port-Hamiltonian systems whose interconnected system is not behaviourally equivalent to the interconnection system, but has more solutions. We demonstrate that a necessary condition for the inequality of the trajectories is that the Dirac structures are non-constant. Since most examples of port-Hamiltonian systems in modelling and control are formulated with respect to constant Dirac structures, it is not surprising that the possible discrepancy has, to the best knowledge of the authors, not been observed earlier. Moreover, we derive a sufficient condition for the behavioural equivalence of the two systems, which is also a sufficient condition for the (geometric) interconnected Dirac structure to be a smooth Dirac structure, which further highlights the obscurity of our example. Using the sufficient condition as well as ``by foot'', we demonstrate that the port-Hamiltonian systems represented by ordinary differential equations without feed-through always interconnect cleanly.

The note is organised as follows. In section~\ref{sec:prelim}, we recall the necessary mathematical preliminaries, in particular port-Hamiltonian systems, weakly differentiable functions with values in manifolds, and the behavioural point of view on dynamical systems. Next, we define the classical and the weak behaviours of port-Hamiltonian systems, and recall the geometric definition of port-Hamiltonian systems with a sufficient, but not necessary, condition for this to be well-defined. This allows us to formulate the main problem: Is the behaviour of the system describing the interconnection of port-Hamiltonian systems identical to that of the interconnected system? An example shows that this is not the case. In the final section of the note, we demonstrate that the sufficient condition for well-definedness of the geometric interconnection also suffices for the two behaviours to be identical.

\section{Mathematical preliminaries}\label{sec:prelim}

In this section, we provide the precise definitions of the concepts sketched out in the introduction. In particular, we recall the geometric definition of port-Hamiltonian systems by means of the underlying geometric data. Here, we consider finite-dimensional, non-linear port-Hamiltonian systems. The non-linearity is expressed both in a non-linear Hamiltonian function, as well as in non-constant Dirac structures and the state-space having the structure of a general manifold. Here, we do assume some familiarity with the elementary notions of differential geometry, in particular manifolds and vector bundles. To keep the note somewhat self-contained and to give the link to port-Hamiltonian systems defined as descriptor systems in the Euclidean coordinate space, we give the local coordinate representations. For simplicity of exposition, we shall assume that the geometric data are smooth. 

In the final part of the section, we present the precise mathematical language which allows us to talk about all local trajectories of dynamical systems simultaneously: the behavioural language introduced by Willems~\cite{Polderman}. To treat classical as well as weak solutions, we precede this section by giving a brief background in the Sobolev spaces of curves with values in general smooth manifolds. The local nature of the solutions is expressed in the language of sheaves over a category of compact intervals, cf.~\cite{SchuSPivVasi20}. We stress that we merely utilise the language of sheaves to express the inherent properties of solutions of differential-algebraic systems. In particular, we do not go into any depth of sheaf theory, and knowledge of this is not necessary, nor assumed, for the understanding of this note.

\subsection{Port-Hamiltonian systems}\label{sec:pH_repetition}

The geometric data of port-Hamiltonian systems are Dirac structures and Lagrangian subbundles. The former were introduced by Courant and Weinstein~\cite{CourWein88,Cour90} and Dorfman~\cite{Dorf87,Dorf93} as a unified description of the modelling of Hamiltonian systems defined with respect to almost Poisson or presymplectic forms and in the presence of non-holonomic constraints; the latter are well-known in symplectic geometry, cf.~\cite{LibeMarl87}.

\begin{definition}\label{def:underlying_geometry}
Let $E\to M$ be a vector bundle with dual bundle $E^*$. The Whitney sum $E\oplus E^*$ is equipped with the pseudo-Euclidean metric
\begin{align*}
\llangle\cdot,\cdot\rrangle: (E\oplus E^*)\otimes_\mathbb{R}(E\oplus E^*)\to\mathbb{R},\qquad \llangle (f,e),(f',e')\rrangle := \langle e',f\rangle+\langle e,f'\rangle
\end{align*}
and the symplectic form
\begin{align*}
\Omega: (E\oplus E^*)\otimes_\mathbb{R}(E\oplus E^*)\to\mathbb{R},\qquad \Omega\big((f,e),(f',e')\big) := \langle e',f\rangle-\langle e,f'\rangle.
\end{align*}
\begin{enumerate}[(i)]
\item A \textit{Dirac structure} is a subvector bundle $\mathcal{D}\leq E\oplus E^*$ which is isotropic with respect to $\llangle\cdot,\cdot\rrangle$ and has rank $\rk\mathcal{D} = \rk E$.
\item A \textit{Lagrangian subbundle} is a subvector bundle $\mathcal{L}\leq E\oplus E^*$ which is isotropic with respect to $\Omega$ and has rank $\rk\mathcal{D} = \rk E$.
\item A subvector bundle $\mathcal{E}\leq E\oplus E^*$ is \textit{non-negative} if, and only if, $\llangle\cdot,\cdot\rrangle\vert_{\mathcal{L}}$ is an Euclidean semimetric.
\item A \textit{constant} Dirac structure (Lagrangian subbundle) is the pullback of a Dirac structure (Lagrangian subbundle) on a vector bundle $E\to\set{\mathrm{pt}}$ under the constant map $M\to\set{\mathrm{pt}}$.
\end{enumerate}
\end{definition}

Strictly speaking, the term \textit{Dirac structure} for the objects described in Definition~\ref{def:underlying_geometry}\,(i) is imprecise. Both Courant and Dorfman introduce for the case that $E = \mathcal{T}M$ is the tangent bundle of a manifold, in which case $E\oplus E^* = \mathcal{T}M\oplus\mathcal{T}^*M =: \mathbb{T}M$ is called extended tangent bundle or Pontryagin bundle, a bracket on the (local) sections of $\mathbb{T}M$. A Dirac structure is then an object as described in Definition~\ref{def:underlying_geometry}\,(i) which is additionally \textit{integrable}, i.e. whose (local) sections are additionally closed under that bracket. For port-Hamiltonian systems, which are generally not defined on a Pontryagin bundle, a canonical Courant or Dorfman bracket does not exist and an integrability condition is not universally known, cf.~\cite{Merk09} for a discussion of potential notions of integrability. Therefore, Dirac structures in the context of port-Hamiltonian systems are generally understood as non-integrable. 

Locally, it is well-known that Dirac structures and Lagrangian subbundles are given by matrix fields as follows, cf.~\cite{Cour90,SchaftJeltsema14}.

\begin{remark}\label{rem:local_reps}
Let $E = M\times\mathbb{R}^n$ be a trivial vector bundle. Then, $E^*$ is identified with $E$ by the fibrewise Euclidean scalar product, so that the duality product in the canonical pseudo-Euclidean metric and symplectic form is the fibrewise Euclidean scalar product. Dirac structures and Lagrangian submanifolds are then subbundles of the trivial vector bundle $M\times\mathbb{R}^{2n}$. Assume that $\mathcal{D}\leq E\oplus E^*$ is a subbundle of rank $n$ which is globally trivialisable. Then, there exist matrix fields $E,F\in\mathcal{C}^\infty(M,\mathbb{R}^{n\times n})$ so that $\rk [F(\cdot)^\top,E(\cdot)^\top] = n$ and
\begin{align}\label{eq:image_representation}
\mathcal{D} = \set{(x,F(x)\lambda,E(x)\lambda)\,\big\vert\,x\in M, \lambda\in\mathbb{R}^n}.
\end{align}
Then, $\mathcal{D}$ is isotropic with respect to the canonical pseudo-Euclidean metric if, and only if, $E(x)^\top F(x) + F(x)^\top E(x) = 0$ for all $x\in M$, and isotropic with respect to the canonical symplectic form if, and only if, $E(x)^\top F(x) - F(x)^\top E(x) = 0$; if additionally $E(\cdot)^\top F(\cdot)$ is pointwise positive semidefinite, then $\mathcal{D}$ is non-negative (and \textit{vice versa}).  Analogously, there exists matrix fields $\widetilde{E},\widetilde{F}\in\mathcal{C}^\infty(M,\mathbb{R}^{n\times n})$ so that $\rk [\widetilde{E}(\cdot),\widetilde{F}(\cdot)]\equiv n$ and
\begin{align}\label{eq:kernel_representation}
\mathcal{D} = \set{(x,X,\alpha)\,\big\vert\,x\in M, X,\alpha\in\mathbb{R}^n~\mathrm{with}~\widetilde{E}(x)X+\widetilde{F}(x)\alpha = 0}.
\end{align}
It can be shown that $\mathcal{D}$ is a Dirac structure if, and only if, $F(\cdot)E(\cdot)^\top + E(\cdot)F(\cdot)^\top \equiv 0$, and Lagrangian if, and only if, $F(\cdot)E(\cdot)^\top - E(\cdot)F(\cdot)^\top \equiv 0$; non-negativity is characterised by (pointwise) non-negativity of $F(\cdot)E(\cdot)^\top$. The representation~\eqref{eq:image_representation} is the \textit{image representation} and~\eqref{eq:kernel_representation} is the \textit{kernel representation}.

Of course, not every vector bundle is globally trivial. In the general case, $E$ and $\mathcal{D}$ are locally trivialisable so that around each $x_0\in M$ there exists an open neighbourhood $U\subseteq M$ of $x_0$ so that $E\vert_{U}$ and $\mathcal{D}\vert_U$ are trivialisable and admit kernel and image representations.
\end{remark}

Before we continue with the definition of port-Hamiltonian systems, we give some elementary examples of Dirac structures and Lagrangian subbundles, cf.~\cite{Cour90}.

\begin{example}\label{ex:fun_Dirac_structures}
\begin{enumerate}[(i)]
\item Let $E\to M$ be any vector bundle, and consider an antisymmetric tensor field $\omega\in\Gamma(E^*\wedge E^*)$, where $\Gamma(\cdot)$ denotes the space of global sections of a vector bundle. Then,
\begin{align*}
\mathcal{D}_\omega := \set{(X,\alpha)\,\big\vert\,X\in E_x, x\in M, \forall Y\in E_x: \langle\omega,X\otimes Y\rangle = \langle\alpha,X\rangle},
\end{align*}
where $\langle\cdot,\cdot\rangle$ denotes the respective duality products, is a Dirac structure. Analogously, if $\pi\in\Gamma(E\wedge E)$, then
\begin{align*}
\mathcal{D}_\pi := \set{(X,\alpha)\,\big\vert\,\alpha\in E^*_x, x\in M, \forall \beta\in E^*_x: \langle\alpha\otimes \beta,\pi\rangle = \langle\alpha,X\rangle}
\end{align*}
is a Dirac structure. In case that the symmetric instead of the antisymmetric tensor product is considered, i.e. $\omega\in\Gamma(E^*\vee E^*)$ and $\pi\in\Gamma(E\vee E)$, $\mathcal{D}_\omega$ and $\mathcal{D}_\pi$ are Lagrangian subbundles. In particular, $E\oplus 0$ and $0\oplus E^*$ are both Dirac structures and Lagrangian subbundles.
\item In case of a constant vector bundle $M\times\mathbb{R}^n$, the Dirac structures described in (i) are given by (pointwise) antisymmetric matrix fields $J,\Omega\in\mathcal{C}^\infty(M,\mathbb{R}^{n\times n})$ as
\begin{align*}
\mathcal{D}_\Omega & := \set{\big(x,X,\Omega(x)X\big)\,\big\vert\,x\in M, X\in\mathbb{R}^n},\\
\mathcal{D}_J & := \set{\big(x,J(x)\alpha,\alpha\big)\,\big\vert\,x\in M, \alpha\in\mathbb{R}^n},
\end{align*}
and the Lagrangian subbundles are given by (pointwise) symmetric matrix fields. These are the standard examples of Dirac structures.
\item An example of a non-trivialisable Dirac structure whose encompassing Pontryagin bundle is trivialisable is given as follows: Let $n\in2\mathbb{N}^*$ be any positive even integer. The Pontryagin bundle of the $n$-dimensional sphere is trivialisable, cf.~\cite[Corollary 2.4]{EtayGoNiSant26}. By the hairy ball theorem and (i), $\mathcal{T}\mathcal{S}^n$ is a non-trivialisable Dirac structure.
\end{enumerate}
\end{example}

With the precise definition of the geometric structures of port-Hamiltonian systems at hand, we may finally recall the precise definition of port-Hamiltonian systems. The origin of this definition can be traced back to~\cite{DalsvdS98} with the first appearance of a somewhat fully geometric definition (i.e. external and resistive ports are modelled by general vector bundles) in~\cite{Merk09}, see also the~\cite{SchaftJeltsema14}.

\begin{definition}[cf.~\cite{SchaftJeltsema14,Merk09}]\label{def:port-Hamiltonian}
A \textit{port-Hamiltonian system} is the dynamical system
\begin{equation}\label{eq:port-Hamiltonian}
\begin{aligned}
\big(\tfrac{\mathrm{d}}{\mathrm{d}t}x,f,-y,\mathrm{d}H(x),e,u\big) & \in\mathcal{D}_x,\\
(f,e) & \in\mathcal{R}_x,
\end{aligned}
\end{equation}
with the underlying data
\begin{enumerate}[(i)]
\item vector bundles $E_r\to M$ and $E_p\to M$ with dual bundles $E_r^*$ and $E_p^*$, respectively,
\item a Dirac subbundle $\mathcal{D}\leq \big(\mathcal{T}M\oplus E_r\oplus E_p\big)\oplus\big(\mathcal{T}^*M\oplus E_r^*\oplus E_p^*\big)$,
\item a non-negative Lagrangian subbundle $\mathcal{R}\leq E_r\oplus E_r^*$,
\item a Hamiltonian function $H\in\mathcal{C}^\infty(M)$.
\end{enumerate}
We write $(E_r\oplus E_p\to M,\mathcal{D},\mathcal{R},H)$ for the tupel of the data of the port-Hamiltonian system~\eqref{eq:port-Hamiltonian}. The variables $(f,e)$ and $(y,u)$ are the pairs of resistive and external port-variables, respectively.
\end{definition}

We illustrate the definition by giving the representations of port-Hamiltonian systems in local coordinates.

\begin{remark}
Consider a port-Hamiltonian system with data $(E_r\oplus E_p\to M,\mathcal{D},\mathcal{R},H)$, and let $x_0\in M$. There exists an open neighbourhood of $U\subseteq M$ of $x_0$ admitting a chart $U\simto V\subseteq\mathbb{R}^n$ so that $E_r\vert_U\simeq V\times\mathbb{R}^{m_r}$, $E_p\vert_U\simeq V\times\mathbb{R}^{m_p}$, $\mathcal{D}\vert_U\simeq V\times\mathbb{R}^{n+m_r+m_p}$ and $\mathcal{R}\vert_U\simeq V\times\mathbb{R}^{m_r}$ are trivialisable. Then, there are matrix fields $E,F\in\mathcal{C}^\infty\big(V,\mathbb{R}^{(n+m_r+m_p)\times(n+m_r+m_p)}\big)$ so that
\begin{align*}
\mathcal{D}\vert_U\simeq \set{(x,X,f,y,\alpha,e,u)\in V\times\mathbb{R}^{2(n+m_r+m_p)}\,\left\vert\,E(x)\begin{pmatrix}
X\\f\\y
\end{pmatrix} = F(x)\begin{pmatrix}
\alpha\\e\\u
\end{pmatrix}\right.}
\end{align*}
and matrix fields $R_1,R_2\in\mathcal{C}^\infty\big(V,\mathbb{R}^{m_r\times m_r}\big)$ with
\begin{align*}
\mathcal{R}\vert_U\simeq\set{(x,R_1(x)\lambda,R_2(x)\lambda)\,\big\vert\,x\in V,\lambda\in\mathbb{R}^{m_r}}.
\end{align*}
Then, the dynamical system
\begin{equation*}
\begin{aligned}
\big(\tfrac{\mathrm{d}}{\mathrm{d}t}x,f,-y,\mathrm{d}H(x),e,u\big) & \in\mathcal{D}_x,\\
(f,e) & \in\mathcal{R}_x,\\
x & \in U
\end{aligned}
\end{equation*}
is \textit{equivalent} to the non-linear differential-algebraic equation
\begin{align}\label{eq:local_pHs}
E(x)\begin{pmatrix}
\tfrac{\mathrm{d}}{\mathrm{d}t}x\\R_1(x)\lambda\\-y
\end{pmatrix} = F(x)\begin{pmatrix}
\nabla H(x)\\R_2(x)\lambda\\u
\end{pmatrix}.
\end{align}
The matrix fields $E$ and $F$ have the properties
\begin{align}\label{eq:local_properties}
\rk [E(\cdot),F(\cdot)] \equiv n+m_r+m_p,\qquad E(\cdot)F(\cdot)^\top+F(\cdot)E(\cdot)^\top\equiv 0;
\end{align}
analogously, $R_1$ and $R_2$ satisfy
\begin{align}\label{eq:local_properties_2}
\rk\begin{bmatrix}
R_1(\cdot)\\R_2(\cdot)
\end{bmatrix}\equiv m_r,\hspace*{3mm} R_1(\cdot)^\top R_2(\cdot)-R_2(\cdot)^\top R_1(\cdot)\equiv 0,\hspace*{3mm} R_1(\cdot)^\top R_2(\cdot)\geq 0.
\end{align}
Conversely, every dynamical system~\eqref{eq:local_pHs} whose matrix fields satisfy the conditions~\eqref{eq:local_properties} and~\eqref{eq:local_properties_2} is the local representation of a port-Hamiltonian system.
\end{remark}

\begin{example}\label{ex:pH_ODE_system}
The well-known example of port-Hamiltonian systems are port-Hamiltonian ODEs given by the dynamical system
\begin{equation}\label{eq:pH_ODE_system}
\begin{aligned}
\tfrac{\mathrm{d}}{\mathrm{d}t} & = \big(J(x)-R(x)\big)\nabla H(x) + B(x)u\\
y & = B(x)^\top\nabla H(x)
\end{aligned}
\end{equation}
with $J,R\in\mathcal{C}^\infty\big(U,\mathbb{R}^{n\times n}\big)$, $B\in\mathcal{C}^\infty\big(U,\mathbb{R}^{n\times m}\big)$, $H\in\mathcal{C}^\infty(U)$, $U\subseteq\mathbb{R}^n$ open and non-empty, so that $J(\cdot)+J(\cdot)^\top\equiv 0$, $R(\cdot)-R(\cdot)\equiv 0$ and $R(\cdot)\geq 0$. To see that~\eqref{eq:pH_ODE_system} is indeed \textit{equivalent} to a port-Hamiltonian system, consider the Dirac structure
\begin{align*}
\mathcal{D} := \set{\big(x,J(x)\alpha+e+B(x)u,-\alpha,-B(x)^\top\alpha,\alpha,e,u\big)\,\left\vert\,x\in U, \begin{pmatrix}
\alpha\\e\\u
\end{pmatrix}\in\mathbb{R}^{N}\right.},
\end{align*}
where $N = 2n+m$ and $\mathcal{T}U\simeq U\times\mathbb{R}^n$ are identified by means of the canonical basis, and consider the non-negative Lagrangian subbundle
\begin{align*}
\mathcal{R} := \set{\big(x,f,R(x)f\big)\,\big\vert\,x\in U, f\in\mathbb{R}^n}.
\end{align*}
Then, the dynamical system~\eqref{eq:pH_ODE_system} is \textit{equivalent} to the port-Hamiltonian system
\begin{align*}
\big(\tfrac{\mathrm{d}}{\mathrm{d}t}x,f,-y,\nabla H(x),e,u\big) & \in\mathcal{D}_x,\\
(f,e) & \in\mathcal{R}_x.
\end{align*}
Evidently, this representation is non-unique.
\end{example}

\subsection{Weakly differentiable curves}\label{sec:Sobolev}

In this section, we recall the basic definitions and properties of weakly differentiable curves with values in manifolds. The classical definition for curves with values in the Euclidean space is the following.

\begin{definition}
Let $I\subseteq\mathbb{R}$ be open. A curve $x:I\to\mathbb{R}^n$ is weakly differentiable with weak derivative $\dot{x}:I\to\mathbb{R}^n$ if, and only if, for each test function $\varphi\in\mathcal{C}^\infty_c(I,\mathbb{R}^n)$, i.e. $\varphi\in\mathcal{C}^\infty(I,\mathbb{R}^n)$ with compact $I\setminus\varphi^{-1}(\set{0})\subseteq [t_0,t_1]$,
\begin{align*}
\int_I \varphi(\tau)^\top \dot{x}(\tau)\,\mathrm{d}\,\tau + \int_I \left(\tfrac{\mathrm{d}}{\mathrm{d}t}\varphi(\tau)\right)^\top x(\tau)\,\mathrm{d}\,\tau.
\end{align*}
\end{definition}

Convent and van Schaftingen~\cite{ConvScha16} propose to define the weakly differentiable functions as the ``pullback'' of vector valued weakly differentiable curves by means of local coordinates.

\begin{definition}[{cf.~\cite[Definition 1.1 and Definition 1.3]{ConvScha16}}]\label{def:Sobolev_on_manifolds}
Let $M$ be a smooth manifold with atlas $(U,\varphi_U)_{U\in\mathfrak{U}}$. A \textit{continuous} curve $x:I\to M$, $I\subseteq\mathbb{R}$ open, is \textit{weakly differentiable} with weak derivative $X:I\to\mathcal{T}M$ if, and only if, for each $\in\mathfrak{U}$, $x^*\varphi_U:x^{-1}(U)\to \mathbb{R}^n$ is weakly differentiable with weak derivative $\mathrm{d}\varphi_UX$, where $x^*\varphi_U := \varphi_U\circ x$ denotes the pullback of $\varphi$ by $x$.
\end{definition}

By means of a decomposition of unity on the preimage $I$, it can be verified that Definition~\ref{def:Sobolev_on_manifolds} is independent of the atlas, and hence weakly differentiable curves are well-defined. However, continuity of $x$ is an essential ingredient for this property. This is not problematic in the context of this note, since continuity of weak solutions is usually assumed. Moreover, Definition~\ref{def:Sobolev_on_manifolds} is (for continuous curves) equivalent to~\cite[Definition 1.1 and Definition 1.3]{ConvScha16}, where the composition with all scalar valued functions with compact support is assumed to be weakly differentiable. Our definition has the advantage that the extension to curves with values in Banach manifolds is straightforward. 

An alternative definition of weakly differentiable curves considers an isometric embedding of $M$, which is equipped with a Riemannian metric, into an Euclidean space, cf.~\cite{Hajl09} and~\cite{ConvScha16} for an extensive reference to the existing literature. It can be shown, that this definition is equivalent to Definition~\ref{def:Sobolev_on_manifolds}, cf.~\cite[Proposition 2.7]{ConvScha16}. For higher orders of differentiability (which are defined as usual, i.e. a curve is $k+1$ times weakly differentiable if, and only if, it is $k$ times weakly differentiable and the $k$-th weak derivative is weakly differentiable, in which case its weak derivative is the $(k+1)$th weak derivative), this equivalence does, in general, not hold true, see the discussion in~\cite{ConvSchaf19}. In this note, we are mostly concerned with $k = 1$, which is the usual minimal assumption for weak solutions of differential equations, and shall therefore ignore this sublety.

The definition of Sobolev spaces of \textit{integrable} functions utilises Lebesgue spaces with values in vector bundles, defined as follows.

\begin{definition}[{cf.~\cite[\S 8]{Pala68}}] \label{def:nonlinear_integrability}
Let $E\to M$ be a vector bundle, and let $g\in\Gamma(E^*\otimes E^*)$ be a vector bundle metric. A measurable curve $x:I\to E$, $I\subseteq\mathbb{R}$ an open or compact set, is in $\mathcal{L}^p_g(I,E)$, $p\in [1,\infty]$ if, and only if, $\sqrt{g(x,x)}\in \mathcal{L}^p(I,\mathbb{R})$.
\end{definition}

Evidently, Definition~\ref{def:nonlinear_integrability} depends on the metric, since not all vector bundle metrics are equivalent. Later, we are most interested in curves defined on a compact interval with continuous shadow, in which case the dependency on the metric vanishes by the following elementary result.

\begin{lemma}\label{lem:independence_of_metric}
Let $E\to M$ be a vector bundle, and let $g\in\Gamma(E^*\otimes E^*)$ and $\widetilde{g}\in\Gamma(E^*\otimes E^*)$ be vector bundle metrics.
\begin{enumerate}[(i)]
\item If $M$ is compact, then $g$ and $\widetilde{g}$ are equivalent.
\item For each compact interval $I$ and each curve $x:I\to E$ with continuous shadow, $x\in \mathcal{L}^p_g(I,E)$ if, and only if, $x\in \mathcal{L}^p_{\widetilde{g}}(I,E)$.
\end{enumerate}
\end{lemma}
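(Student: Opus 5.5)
For (i), I will use compactness of the unit sphere bundle. Fix a continuous norm on fibres, for instance $g$ itself, and consider the $g$-unit sphere bundle $S_g := \set{v\in E\,\vert\, g(v,v) = 1}$. This is a closed subset of $E$. Since $M$ is compact and the fibres of $S_g$ are compact spheres, $S_g$ is compact. To see this, cover $M$ by finitely many trivialising neighbourhoods $U_i$ with compact closures inside larger trivialising sets. Over each $\overline{U_i}$, $S_g$ is a closed bounded subset of $\overline{U_i}\times\mathbb{R}^k$, because the trivialised Gram matrix of $g$ is continuous and uniformly positive definite on the compact set $\overline{U_i}$. The function $v\mapsto\widetilde{g}(v,v)$ is continuous and strictly positive on $S_g$, so it attains a minimum $c>0$ and a maximum $C<\infty$ there. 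By homogeneity of degree two in $v$, this gives $c\,g(v,v)\leq\widetilde{g}(v,v)\leq C\,g(v,v)$ for all $v\in E$, which is the claimed equivalence.

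For (ii), I reduce to (i) using the shadow $\pi\circ x: I\to M$, where $\pi:E\to M$ is the bundle projection. By assumption this map is continuous, so $K := \pi(x(I))$ is compact in $M$. I then apply the argument of (i) to the restriction $E\vert_K\to K$. The proof of (i) only used compactness of the base and continuity of the metrics, not a manifold structure, so it applies to the compact subset $K$ as well. Alternatively, I can cover $K$ by finitely many relatively compact trivialising charts. Either way I obtain constants $0<c\leq C$ with $c\,g(v,v)\leq \widetilde{g}(v,v)\leq C\,g(v,v)$ for all $v\in E_y$ with $y\in K$. Consequently $\sqrt{c}\,\sqrt{g(x,x)}\leq\sqrt{\widetilde{g}(x,x)}\leq\sqrt{C}\sqrt{g(x,x)}$ pointwise on $I$. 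Since $\mathcal{L}^p(I,\mathbb{R})$ is closed under multiplication by constants and under domination by measurable functions, one of these functions lies in $\mathcal{L}^p$ exactly when the other does. Measurability of $\sqrt{\widetilde{g}(x,x)}$ is automatic, since it is a continuous function composed with the measurable map $x$.

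The only real obstacle is justifying compactness of the unit sphere bundle over a compact base, or over the compact set $K$. I handle this with the local trivialisations described above: in a trivialisation the metric $g$ is a continuous family of positive definite matrices. On a compact set its smallest eigenvalue is bounded below by a positive constant, so the unit sphere bundle is bounded in each chart, and closed bounded sets are compact there.
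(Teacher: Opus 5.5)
Your proof is correct, but it applies the compactness argument to a different object than the paper does. The paper encodes $g$ and $\widetilde g$ as bundle isomorphisms $\gamma,\widetilde\gamma\colon E\to E^*$ and writes $g(e,e)=\widetilde g((\gamma^*\widetilde\gamma^{-1})(e),e)$. It then bounds this by the maximum over the compact base of the fibrewise operator norm of that endomorphism, and gets the reverse inequality by symmetry. Part (ii) is the same estimate with the maximum taken over $t\in I$ along the continuous shadow, i.e.\ over the compact set you call $K$.

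You instead apply the extreme value theorem to $v\mapsto\widetilde g(v,v)$ on the $g$-unit sphere bundle and conclude by homogeneity. The paper's route is shorter and stays on the base. However, it leaves two points implicit: that the operator norm is taken with respect to $\widetilde g$ (so that Cauchy--Schwarz gives $\widetilde g(Ae,e)\le\norm{A}\,\widetilde g(e,e)$), and that $x\mapsto\norm{A_x}$ is continuous. Your route gives both constants at once, as the minimum and maximum of a single continuous function. You also justify the compactness it needs, via relatively compact trivialising neighbourhoods and a uniform lower eigenvalue bound on the Gram matrix, and that argument carries over unchanged to a compact subset $K$ of the base. Finally, you check measurability of $\sqrt{\widetilde g(x,x)}$, which the paper does not address.
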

\begin{proof}
Both $g$ and $\widetilde{g}$ correspond to vector bundle isomorphisms $\gamma:E\simto E^*$ and $\widetilde{\gamma}:E\simto E^*$ so that $g(e,e') = \langle\gamma(e),e'\rangle$ and $\widetilde{g}(e,e') = \langle\widetilde{\gamma}(e),e'\rangle$, respectively. Therefore we obtain for compact $M$, for all $e\in E$,
\begin{align*}
g(e,e) = \langle\widetilde\gamma\big((\gamma^*\widetilde{\gamma}^{-1})(e)\big),e\rangle = \widetilde{g}\big((\gamma^*\widetilde{\gamma}^{-1})(e),e)\leq  \max_{x\in M}\norm{(\gamma^*\widetilde{\gamma}^{-1})_x}\widetilde{g}(e,e)
\end{align*}
and, analogously, $\widetilde g(e,e)\leq \max_{x\in M}\norm{(\widetilde\gamma^*{\gamma}^{-1})_x}g(e,e)$. This demonstrates that $g$ and $\widetilde{g}$ are equivalent vector bundle metrics. Likewise, we have for each (measureable) curve $\xi:I\to E$ with continuous shadow $x$,
\begin{align*}
\left({\max_{t\in I}\norm{(\widetilde\gamma^*{\gamma}^{-1})_{x(t)}}}\right)^{-1}\sqrt{\widetilde g(\xi,\xi)} \leq {g(\xi,\xi)}\leq {\max_{t\in I}\norm{(\gamma^*\widetilde{\gamma}^{-1})_{x(t)}}}{\widetilde g(\xi,\xi)}
\end{align*}
\end{proof}

We have now recalled the definitions of weakly differentiable curves with values on a manifold, and of integrable curves with values on a vector bundle. This enables us to formulate, next to the classical, also the weak solutions of port-Hamiltonian systems. All solutions are collected in the behaviour, whose structure reflects the natural properties of solutions.

\subsection{Behaviours and sheaves}\label{sec:behaviours}

The behavioural point of view on dynamical systems was introduced by Willems, and is the basis of the monograph~\cite{Polderman}. A dynamical system in the behavioural picture is given as a triple $(\mathbb{T},\Sigma,\mathfrak{B})$ of a set $\mathbb{T}$, the time axis, a set $\Sigma$, the signal space, and a set $\mathfrak{B}\subseteq\Sigma^\mathbb{T}$, i.e. $\mathfrak{B}$ is a set of curves. In most practical applications, $\mathbb{T}$ is $\mathbb{R}$ or $\mathbb{R}_{\geq 0}$ for continuous time systems and $\mathbb{Z}$ or $\mathbb{N}$  for discrete time systems. The drawback of this approach is that elements of the behaviour share the same time axis, so phenomena such as finite escape times are not straightforward to represent in the theory. The second drawback is that local (in time) properties of solutions of dynamical systems are cumbersome, since all solutions are global. A more convenient description of behaviours is given in the language of \textit{sheaves} over a category of non-degenerate compact intervals, cf.~\cite{SchuSPivVasi20} for a similar treatment. In this note, we do not need the full mathematical apparatus of sheaf theory, for which we refer the reader to~\cite{Bred97,John02,MaLaMoer92}; instead, we borrow the terminology.

\begin{definition}[cf.~\cite{Bock26}]
An \textbf{Int}-sheaf $\mathfrak{F}$ is given by the data
\begin{enumerate}[(i)]
\item a set $\mathfrak{F}([a,b])$ for each $a<b\in\mathbb{R}$
\item a function $\mathfrak{F}([c,d]\subseteq[a,b]):\mathfrak{F}([a,b])\to\mathfrak{F}([c,d])$ for each $a\leq b<c\leq d$ 
\end{enumerate}
with the properties
\begin{enumerate}[(a)]
\item $\mathfrak{F}([a,b]\subseteq[a,b]) = \mathrm{id}_{\mathfrak{F}([a,b])}$
\item $\mathfrak{F}([e,f]\subseteq[c,d])\circ \mathfrak{F}([c,d]\subseteq[a,b]) = \mathfrak{F}([e,f]\subseteq[a,b])$
\item If $\mathfrak{F}([c,d]\subseteq[a,d])(x) = \mathfrak{F}([c,d]\subseteq[c,b])(y)$, then there exists a unique $z\in\mathfrak{F}([a,b])$ with $\mathfrak{F}([a,d]\subseteq [a,b])(z) = x$ and $\mathfrak{F}([c,b]\subseteq[a,b])(z) = y$.
\end{enumerate}
for all $a\leq c\leq e<f\leq d\leq b$ and $x\in\mathfrak{F}([a,d])$ and $y\in\mathfrak{F}([c,b])$. For simplicity, we adopt the notion
\begin{align*}
x\vert_{[c,d]} := \mathfrak{F}([c,d]\subseteq[a,b])(x)
\end{align*}
for the restriction morphisms $\mathfrak{F}([c,d]\subseteq[a,b])$.
\end{definition}

In this note, we are mainly interested in three \textbf{Int}-sheaves, the classical differentiable or continuous weakly differentiable curves with values in a manifold, and the vector bundle values Lebesgue spaces with continuous shadow. All those are collections of particular functions, whose restriction morphisms are the usual restrictions of functions; such sheaves shall be called sheaves of functions. The definition of the classical differentiable curves on compact intervals warrants some attention. One possibility to define these in the sheaf-theoretical spirit is to take germs of differentiable curves defined on an open neighbourhood of the compact interval. The alternative considers continuous curves admitting a differentiable extension onto an open neighbourhood. In this note, we consider the latter for simplicity.

\begin{lemma}
Let $M$ be a manifold and $k\in\mathbb{N}\cup\set{\infty}$. Define, for each $t_0<t_1\in\mathbb{R}$, 
\begin{align*}
\mathcal{C}^k([t_0,t_1],M) := \set{x\in\mathcal{C}([t_0,t_1],M)\,\left\vert\,\begin{array}{r}\exists \varepsilon>0\exists \overline{x}\in\mathcal{C}^k\big((t_0-\varepsilon,t_1+\varepsilon),M\big)\hspace*{3mm}\\: x = \overline{x}\vert_{[t_0,t_1]}
\end{array}\right.\!}.
\end{align*}
$\mathcal{C}^k(\cdot,M)$ is an \textbf{Int}-sheaf of functions.
\end{lemma}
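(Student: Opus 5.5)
Since restrictions are the usual restrictions of functions, I first check that the restriction maps are well-defined. If $x\in\mathcal{C}^k([a,b],M)$ with extension $\overline{x}\in\mathcal{C}^k((a-\varepsilon,b+\varepsilon),M)$ and $[c,d]\subseteq[a,b]$, then $\overline{x}$ restricted to $(c-\varepsilon,d+\varepsilon)\subseteq(a-\varepsilon,b+\varepsilon)$ is a $\mathcal{C}^k$ extension of $x\vert_{[c,d]}$. Hence $x\vert_{[c,d]}\in\mathcal{C}^k([c,d],M)$. Properties (a) and (b) then hold trivially, because restriction of functions is functorial.

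The substance lies in the gluing axiom (c). Let $a\le c<d\le b$, $x\in\mathcal{C}^k([a,d],M)$ and $y\in\mathcal{C}^k([c,b],M)$ with $x\vert_{[c,d]}=y\vert_{[c,d]}$. Uniqueness is immediate, since any $z$ is determined pointwise on $[a,d]\cup[c,b]=[a,b]$. For existence I define $z:=x$ on $[a,d]$ and $z:=y$ on $[c,b]$. This is consistent on the overlap, and $z$ is continuous by the pasting lemma on the closed cover.

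The main obstacle is to produce a $\mathcal{C}^k$ extension of $z$ to an open neighbourhood of $[a,b]$. Gluing $\overline{x}$ and $\overline{y}$ directly fails: they agree on $[c,d]$ but may differ on $(c-\varepsilon,c)$ and $(d,d+\varepsilon)$. Smooth partitions of unity also fail, since $M$ is not a vector space. The saving point is that the overlap $[c,d]$ has nonempty interior, because $c<d$.

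So I pick $s\in(c,d)$ and define
\begin{align*}
\overline{z}(t):=\begin{cases}\overline{x}(t), & t\in(a-\varepsilon,s],\\ \overline{y}(t), & t\in[s,b+\varepsilon),\end{cases}
\end{align*}
with $\varepsilon$ the smaller of the two extension radii. On the open interval $(c,d)$ both $\overline{x}$ and $\overline{y}$ coincide with $x=y$. Therefore $\overline{z}$ agrees with $\overline{x}$ on $(a-\varepsilon,d)$ and with $\overline{y}$ on $(c,b+\varepsilon)$. These two open sets cover $(a-\varepsilon,b+\varepsilon)$, and $\overline{z}$ is $\mathcal{C}^k$ on each, so $\overline{z}$ is $\mathcal{C}^k$, since differentiability is local. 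Finally, $\overline{z}\vert_{[a,b]}=z$, because on $[a,s]\subseteq[a,d]$ it equals $x$ and on $[s,b]\subseteq[c,b]$ it equals $y$. The case $k=\infty$ is identical.

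One technical point needs care: the definition of an \textbf{Int}-sheaf allows the degenerate overlap $c=d$. The restriction data are only given for nondegenerate intervals, so I will note that (c) is only required for $c<d$. For $c=d$ the statement would in fact be false, because of the corner at the junction point.
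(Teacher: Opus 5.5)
Your proof is correct and follows essentially the same route as the paper: both glue the extensions $\overline{x}$ and $\overline{y}$ across the nondegenerate open overlap $(c,d)$, where they coincide with $x=y$, and then deduce $\mathcal{C}^k$-regularity by locality on the open cover $(a-\varepsilon,d)\cup(c,b+\varepsilon)$. Splitting at a point $s\in(c,d)$ is only a bookkeeping variant of the paper's direct definition on the two overlapping open sets. Your remark that $c<d$ is essential is also right; the paper's quantifier $c\le e<f\le d$ already enforces it.
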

\begin{proof}
The restriction morphisms are well-defined and satisfy the conditions (a) and (b). Let $a\leq c<d\leq b$, and let $x\in\mathcal{C}^k([a,d],M)$ and $y\in\mathcal{C}^k([c,b],M)$ with $[x]\vert_{[c,d]} = [y]\vert_{[c,d]}$. Without loss of generality, there are extensions $\overline{x}\in\mathcal{C}^k\big((a-\varepsilon,d+\varepsilon),M\big)$ and $\overline{y}\in\mathcal{C}^k\big((c-\varepsilon,b+\varepsilon),M\big)$ for some $\varepsilon>0$ so that $\overline{x}\vert_{[a,d]} = x$ and $\overline{y}\vert_{[c,b]} = y$. In particular, the function
\begin{align*}
\overline{z}: (a-\varepsilon,b+\varepsilon)\to M,\qquad t\mapsto\begin{cases}
x(t), & t\in (a-\varepsilon,d),\\
y(t), & t\in (c,b+\varepsilon)
\end{cases}
\end{align*}
is the unique well-defined continuous function with $x = \overline{y}\vert_{[a,d]}$ and $y = \overline{z}\vert_{[c,b]}$. For $k>0$, $\overline{z}$ is $k$ times continuously differentiable since both $x$ and $y$ are continuously differentiable on $(a-\varepsilon,d)$ and $(c,b+\varepsilon)$, which have non-trivial overlap $(c,d)$. Therefore, $z := \overline{z}\vert_{[a,b]}\in\mathcal{C}^k([a,b],M)$ is the unique function with $z\vert_{[a,d]} = x$ and $z\vert_{[c,b]} = y$. This shows that $\mathcal{C}^k(\cdot,M)$ is an \textbf{Int}-sheaf.
\end{proof}

\begin{remark}
Since a function defined on an open interval is differentiable in a point if, and only if, it is differentiable from the left and from the right and both directional derivatives coincide, the differential operators
\begin{align*}
\tfrac{\mathrm{d}}{\mathrm{d}t}:\mathcal{C}^k(\cdot,M)\to\mathcal{C}^{k-1}(\cdot,\mathcal{T}M),\qquad \mathcal{C}^k([t_0,t_1],M)\ni\tfrac{\mathrm{d}}{\mathrm{d}t}x := \left.\left(\tfrac{\mathrm{d}}{\mathrm{d}t}\overline{x}\right)\right\vert_{[t_0,t_1]}
\end{align*}
where $\overline{x}$ is any $\mathcal{C}^k$ extension of $x$ and the symbols ``$\infty-1$'' and ``$\infty$'' are identified, are well-defined. An equivalent definition is
\begin{align*}
\tfrac{\mathrm{d}}{\mathrm{d}t}x(t) = \begin{cases}
\lim_{h\downarrow 0}\frac{x(t_0+h)-x(t_0)}{h}, & t=t_0,\\
\tfrac{\mathrm{d}}{\mathrm{d}t}\big(x\vert_{(t_0,t_1)}\big)(t), & t\in(t_0,t_1),\\
\lim_{h\downarrow 0}\frac{x(t_1-h)-x(t_1)}{h}, & t=t_1. 
\end{cases}
\end{align*}
\end{remark}

The definition of the continous weakly differentiable curves and the Lebesgue spaces of curves (with continuous shadow) is straightforward. We briefly verify that these have indeed the structure of an \textbf{Int}-sheaf.

\begin{lemma}\label{lem:weak_sheaves}
Let $\pi:E\to M$ be a vector bundle and $p\in[1,\infty]$. Choose a vector bundle metric $g$ and define, for each $t_0<t_1\in\mathbb{R}$, 
\begin{align*}
\mathcal{L}_c^p([t_0,t_1],E) := \set{\xi:[t_0,t_1]\to E\,\left\vert\,\begin{array}{l}
\xi^*\pi\in\mathcal{C}([t_0,t_1],M),\\
\sqrt{g(\xi,\xi)}\in \mathcal{L}^p([t_0,t_1],\mathbb{R})
\end{array}\right.\!}
\end{align*}
and
\begin{align*}
\big(\mathcal{C}\cap W^{1,p}\big)([t_0,t_1],M) := \set{x\in\mathcal{C}([t_0,t_1],M)\,\left\vert\,\begin{array}{l}
x~\text{weakly~differentiable}\\
\text{with~weak~derivative}\\X\in \mathcal{L}_c^p([t_0,t_1],\mathcal{T}M)
\end{array}\right.\!
}.
\end{align*}
Both $\mathcal{L}_c^p(\cdot,E)$ and $\big(\mathcal{C}\cap W^{1,p}\big)(\cdot,M)$ are \textbf{Int}-sheaves of functions.
\end{lemma}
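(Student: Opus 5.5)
We have to check, for both assignments, that they are well defined and satisfy the axioms (a)--(c). The restriction maps are ordinary restriction of functions in both cases, so (a) and (b) hold trivially. Before that, we must make sure the definitions do not depend on the chosen metric $g$ and that restriction stays inside the class. Independence of $g$ follows from Lemma~\ref{lem:independence_of_metric}\,(ii), because every element of $\mathcal{L}_c^p([t_0,t_1],E)$ has a continuous shadow on a compact interval. Restriction to $[c,d]\subseteq[a,b]$ keeps the shadow continuous and keeps $\sqrt{g(\xi,\xi)}$ in $\mathcal{L}^p$. For $\mathcal{C}\cap W^{1,p}$, weak differentiability as in Definition~\ref{def:Sobolev_on_manifolds} refers to the interior $(t_0,t_1)$. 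It restricts to $(c,d)$ because test functions supported in $(c,d)$ are also test functions on $(a,b)$, and the derivative restricts with it.

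The gluing axiom (c) is the substance. Take $a\le c<d\le b$, $x$ on $[a,d]$ and $y$ on $[c,b]$ that agree on $[c,d]$, and let $z$ be the glued function. It is well defined and unique as a function, since the two pieces agree pointwise on the overlap. For $\mathcal{L}^p_c$, the shadow of $z$ is continuous because the two continuous shadows agree on the closed overlap $[c,d]$, which has non-empty interior (pasting lemma). Integrability holds because
\[
\int_a^b g(z,z)^{p/2} \le \int_a^d g(x,x)^{p/2}+\int_c^b g(y,y)^{p/2}<\infty ,
\]
and for $p=\infty$ the essential supremum is the maximum of the two. For $\mathcal{C}\cap W^{1,p}$, the curve $z$ is continuous by the same argument. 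The candidate weak derivative $Z$ is $X$ on $[a,d]$ and $Y$ on $[c,b]$. These agree almost everywhere on $(c,d)$ by uniqueness of weak derivatives there, so $Z$ is well defined almost everywhere and lies in $\mathcal{L}^p_c([a,b],\mathcal{T}M)$ by the previous case.

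The main obstacle is showing that $Z$ really is the weak derivative of $z$ on $(a,b)$ in every chart. I would use a smooth partition of unity $\{\psi_1,\psi_2\}$ on $(a,b)$ subordinate to the open cover $\{(a,d),(c,b)\}$; this works because $c<d$. Fix a chart $U$ and a test function $\varphi\in\mathcal{C}_c^\infty(z^{-1}(U)\cap(a,b),\mathbb{R}^n)$. Split it as $\varphi=\psi_1\varphi+\psi_2\varphi$. The function $\psi_1\varphi$ has compact support in $z^{-1}(U)\cap(a,d)=x^{-1}(U)\cap(a,d)$, so the weak derivative identity for $x^*\varphi_U$ applies to it, and likewise $\psi_2\varphi$ is handled by $y$. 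Adding the two identities gives the defining identity for $z^*\varphi_U$ with derivative $\mathrm{d}\varphi_U Z$. The one point needing care is that $\operatorname{supp}(\psi_i\varphi)$ is compact inside the correct open set. This holds because $\operatorname{supp}\psi_1\subseteq(a,d)$ is closed in $(a,b)$ and $\operatorname{supp}\varphi$ is compact. Uniqueness of $z$ is inherited from uniqueness of the glued function, which completes (c).
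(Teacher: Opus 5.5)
Your proof is correct. It differs from the paper's in the one step that carries real content: showing that the glued derivative $Z$ is a weak derivative of the glued curve $z$ in every chart.

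\paragraph{How the paper argues.} The paper cuts the chart integral sharply at the overlap point $t_1'$. It then applies the weak-derivative identities of the two pieces to the restrictions of $f$ to $z^{-1}(U)\cap[t_0,t_1']$ and to $z^{-1}(U)\cap[t_1',t_1]$. Since $f$ need not vanish at $t_1'$, these restrictions are not admissible test functions. So that step silently relies on integration by parts with boundary terms. This uses absolutely continuous representatives of $W^{1,1}$ curves, and the boundary terms at $t_1'$ cancel by continuity of $z$. The paper also treats the possibly disconnected set $z^{-1}(U)$ loosely.

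\paragraph{How you argue.} Your partition of unity $\{\psi_1,\psi_2\}$ subordinate to $\{(a,d),(c,b)\}$ makes each $\psi_i\varphi$ a genuine test function for the corresponding piece. You therefore only use the defining identity, with no boundary terms and no appeal to absolute continuity. This is exactly the cutoff device the paper itself uses later to glue elements of $\mathfrak{W}(\cdot,\mathcal{D})$, and it makes your argument the more rigorous of the two.

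\paragraph{Smaller points.} Your estimate $\int_a^b g(z,z)^{p/2}\le\int_a^d g(x,x)^{p/2}+\int_c^b g(y,y)^{p/2}$ is the correct form. The paper writes an equality that double-counts on the overlap.

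The one loose spot in your proposal is obtaining $Z\in\mathcal{L}^p_c$ ``by the previous case''. The gluing axiom needs pointwise agreement on the overlap, but $X$ and $Y$ agree only almost everywhere there. Set $Z:=X$ on $[a,d]$ and $Z:=Y$ on $(d,b]$, then apply your integral estimate directly; this settles it. The paper instead averages the two derivatives on the overlap.
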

\begin{proof}
By Lemma~\ref{lem:independence_of_metric}, $\mathcal{L}_c^p(\cdot,E)$ is well-defined. Given $t_0<t_0'<t_1'<t_1\in\mathbb{R}$, $\xi\in\mathcal{L}_c^p([t_0,t_1'],E)$ and $\xi'\in\mathcal{L}_c^p([t_0',t_1],E)$ with $\xi\vert_{[t_0',t_1']} = \xi'\vert_{[t_0',t_1']}$, the function
\begin{align*}
\zeta: [t_0,t_1]\to E,\qquad t\mapsto\begin{cases}
\xi(t), & t\in[t_0,t_1']\\
\xi'(t), & t\in[t_0',t_1]
\end{cases}
\end{align*}
is the unique well-defined function with $\zeta^*\pi\in\mathcal{C}([t_0,t_1],M)$ and $\zeta\vert_{[t_0,t_1']} = \xi$ and $\zeta\vert_{[t_0',t_1]} = \xi'$. With
\begin{align*}
\sqrt{g(\zeta,\zeta)} = \chi_{[t_0,t_1']}\sqrt{g(\xi,\xi)}+\chi_{[t_0',t_1]}\sqrt{g(\xi',\xi')},
\end{align*}
where $\chi_\cdot$ denotes the indicator function of a set and the partially defined functions $\sqrt{g(\xi,\xi)},\sqrt{g(\xi',\xi')}$ are extended by zero, it is seen that $\sqrt{g(\zeta,\zeta)}\in\mathcal{L}^p([t_0,t_1],\mathbb{R})$. Therefore, $\zeta\in\mathcal{L}_c^p([t_0,t_1],E)$ and $\mathcal{L}_c^p(\cdot,E)$ is an \textbf{Int}-sheaf.

$\big(\mathcal{C}\cap W^{1,p}\big)(\cdot,M)$ is well-defined, since the weak derivatives of weakly differentiable functions coincide almost everywhere, cf.~\cite[Proposition 1.5]{ConvScha16}. Given $t_0<t_0'<t_1'<t_1\in\mathbb{R}$, $x\in\big(\mathcal{C}\cap W^{1,p}\big)([t_0,t_1'],M)$ and $x'\in\big(\mathcal{C}\cap W^{1,p}\big)([t_0',t_1],M)$ with $x\vert_{[t_0',t_1']} = x'\vert_{[t_0',t_1']}$, the function 
\begin{align*}
z: [t_0,t_1]\to E,\qquad t\mapsto\begin{cases}
x(t), & t\in[t_0,t_1']\\
x'(t), & t\in[t_0',t_1]
\end{cases}
\end{align*}
is the unique continuous function satisfying $z\vert_{[t_0,t_1']} = x$ and $z\vert_{[t_0',t_1]} = x'$. Let $X$ and $X'$ be weak derivatives of $x$ and $x'$, respectively. Then, both $X\vert_{[t_0',t_1']}$ and $X'\vert_{[t_0',t_1']}$ are weak derivatives of $z\vert_{[t_0',t_1']}$ and do therefore coincide almost everywhere. In particular,
\begin{align*}
Z:[t_0,t_1]\to \mathcal{T}M,\qquad t\mapsto\begin{cases}
X(t), & t\in[t_0,t_0'),\\
\frac{1}{2}\big(X(t)+X'(t)\big), & t\in [t_0',t_1'],\\
X'(t), & t\in(t_1',t_1]
\end{cases}
\end{align*}
is, by $Z = \chi_{[t_0,t_1']}X+\chi_{[t_1',t_1]}X'$ almost everywhere, in $\mathcal{L}^p_c([t_0,t_1],\mathcal{T}M)$, and $Z\vert_{[t_0,t_1']}$ and $Z\vert_{[t_0',t_1]}$ are weak derivatives of $x$ and $x'$, respectively. Hence, if $\varphi_U:U\to\mathbb{R}^n$ is a chart, then, for all $f\in\mathcal{C}_c^\infty\big(z^{-1}(U),\mathbb{R}^n\big)$, we have
\begin{align*}
\int_{z^{-1}(U)}\tfrac{\mathrm{d}}{\mathrm{d}t}f(\tau)^\top z^*\varphi_U(\tau)\,\mathrm{d}\tau & = \int_{z^{-1}(U)\cap[t_0,t_11']}\tfrac{\mathrm{d}}{\mathrm{d}t}f(\tau)^\top x^*\varphi_U(\tau)\,\mathrm{d}\tau\\
&\qquad +\int_{z^{-1}(U)\cap[t_1',t_0']}\tfrac{\mathrm{d}}{\mathrm{d}t}f(\tau)^\top (x')^*\varphi_U(\tau)\,\mathrm{d}\tau\\
& = -\int_{z^{-1}(U)\cap[t_0,t_11']} (\mathrm{d}\varphi_U
X(\tau))^\top f(\tau)\,\mathrm{d}\tau\\
&\qquad -\int_{z^{-1}(U)\cap[t_1',t_0']}(\mathrm{d}\varphi_U
X'(\tau))^\top f(\tau)^\top\,\mathrm{d}\tau\\
& = -\int_{z^{-1}(U)}(\mathrm{d}\varphi_U
Z(\tau))^\top f(\tau)^\top\,\mathrm{d}\tau
\end{align*}
and hence $Z$ is a week derivative of $z$ so that $z\in\big(\mathcal{C}\cap W^{1,p}\big)([t_0,t_1],M)$. This shows that $\big(\mathcal{C}\cap W^{1,p}\big)(\cdot,M)$ is an \textbf{Int}-sheaf.
\end{proof}

\section{Behaviours of port-Hamiltonian systems}

In this section, we describe the behaviours of port-Hamiltonian systems. For simplicity, we do not treat the variables of the resistive port as latent variables, which do not appear explicitly in the behaviour. In particular for port-Hamiltonian ODE systems~\eqref{eq:pH_ODE_system} this is not intuitive, since the variables of the resistive port are implicit and non-uniquely determined. However, we have defined port-Hamiltonian systems by their geometrical data. Therefore, keeping all port-variables as explicit part of the behaviour removes one main contributor to the potential ambiguity in the correspondence of port-Hamiltonian systems and their behaviours.

The most straightforward to define behaviour is the classical behaviour, where it is assumed that the port-variables are continuous and the state trajectory is (at least) once continuously differentiable. The corresponding behaviour is given as follows.

\begin{definition-proposition}
Let $(\pi:E_r\oplus E_p\to M,\mathcal{D},\mathcal{R},H)$ be the geometrical data of the port-Hamiltonian system
\begin{equation}\label{eq:pHS_classical}
\begin{aligned}
\left(\tfrac{\mathrm{d}}{\mathrm{d}t}x,f,-y,\mathrm{d}H_x,e,u\right) & \in\mathcal{D}_x\\
(f,e) & \in\mathcal{R}_x
\end{aligned}
\end{equation}
Define, for each $t_0<t_1\in\mathbb{R}$, $\mathfrak{B}^{\mathrm{c}}_{\mathcal{D},\mathcal{R},H}([t_0,t_1])$ as
\begin{align*}
\set{(f,y,e,u)\in\mathcal{C}([t_0,t_1],E_r\oplus E_p\oplus E_r^*\oplus E_p^*)\left\vert\!\begin{array}{l}
x = (f,y)^*\pi\in\mathcal{C}^1([t_0,t_1],M),\\
\eqref{eq:pHS_classical}~\mathrm{satisfied~pointwise}
\end{array}\right.\!\!}.
\end{align*}
$\mathfrak{B}^{\mathrm{c}}_{\mathcal{D},\mathcal{R},H}$ is an \textbf{Int}-sheaf of functions, the \textit{classical behaviour} of~\eqref{eq:pHS_classical}.
\end{definition-proposition}
\begin{proof}
Since the glueing (and restriction) in $\mathcal{C}^1(\cdot,M)$ is defined by the glueing (restriction) in $\mathcal{C}(\cdot,M)$ and since~\eqref{eq:pHS_classical} is supposed to be satisfied pointwise, the glueing (restriction) of elements of $\mathfrak{B}^{\mathrm{c}}_{\mathcal{D},\mathcal{R},H}$ in $\mathcal{C}(\cdot,E_r\oplus E_p\oplus E_r^*\oplus E_p^*)$ remains in $\mathfrak{B}^{\mathrm{c}}_{\mathcal{D},\mathcal{R},H}$. This verifies that $\mathfrak{B}^{\mathrm{c}}_{\mathcal{D},\mathcal{R},H}$ is indeed an \textbf{Int}-sheaf.
\end{proof}

The behaviour is not uniquely determined by the geometric data of the port-Hamiltonian system. In particular, there are degenerate cases in which no solution exists at all.

\begin{example}
There are port-Hamiltonian systems with empty behaviour. The simple example is the system
\begin{align*}
\mathrm{d}H_x = 0
\end{align*}
in case that $\mathrm{d}H$ vanishes nowhere (e.g. $H(x)=\exp{x}$). The underlying Dirac structure is the constant Dirac structure $D := \mathbb{R}\times\set{0}\leq \mathbb{R}\oplus\mathbb{R}$, and the external and resistive ports are zero-dimensional. In particular, the assignment of the geometric data of a port-Hamiltonian system to its (classical) behaviour is not one-to-one.
\end{example}

Next, we shall construct the weak behaviour, where we follow the lead of~\cite{Reis25b}, which considers weak solutions of port-Hamiltonian systems in trivial vector bundles.

\begin{definition}[cf.~\cite{Reis25b}]
Let $E\to M$ be a vector bundle, and consider a curve $(f,e)\in\mathcal{L}^1([t_0,t_1],E\oplus E^*)$ with continuous shadow $x:[t_0,t_1]\to M$. The space of test-sections for a subvector bundle $\mathcal{E}\leq E\oplus E^*$ along $x$ is
\begin{small}
\begin{align*}
\mathcal{C}_c([t_0,t_1],x^*\mathcal{E}) := \set{(\varphi,\varepsilon)\in\mathcal{C}([t_0,t_1],x^*\mathcal{E})\big\vert\,\mathrm{cl}\big([t_0,t_1]\setminus(\varphi,\varepsilon)^{-1}(\set{0_\mathcal{D}})\big)\subseteq(t_0,t_1)},
\end{align*}
\end{small}
where $x^*\mathcal{E}$ denotes the pullback bundle in the topological manifolds.
\begin{enumerate}[(i)]
\item Let $\mathcal{D}\leq E\oplus E^*$ be a Dirac structure. $(f,e)$ satisfies $(f,e)\in\mathcal{D}$ \textit{in the weak sense} if, and only if,
\begin{align*}
\forall(\varphi,\varepsilon)\in\mathcal{C}_c([t_0,t_1],x^*\mathcal{D}): \int_{[t_0,t_1]} \langle \varepsilon_{x(t)},f(t)\rangle+\langle e(t),\varphi_{x(t)}\rangle\,\mathrm{d}t = 0.
\end{align*}
\item Let $\mathcal{L}\leq E\oplus E^*$ be a Lagrangian subbundle. $(f,e)$ satisfies $(f,e)\in\mathcal{L}$ \textit{in the weak sense} if, and only if,
\begin{align*}
\forall(\varphi,\varepsilon)\in\mathcal{C}_c([t_0,t_1],x^*\mathcal{L}): \int_{[t_0,t_1]} \langle \varepsilon_{x(t)},f(t)\rangle-\langle e(t),\varphi_{x(t)}\rangle\,\mathrm{d}t = 0.
\end{align*}
\end{enumerate}
\end{definition}

\begin{remark}
The collection of spaces of test sections is not an \textbf{Int}-sheaf of functions. Since each test section can be extended by zero to a test-section on a larger interval, it has the natural structure of an \textbf{Int}-\textit{cosheaf}. This structure is not exploited in this note, but can (in the trivial case) be used to define distributional solutions.
\end{remark}

To construct a behaviour, we utilise that the weak solutions of the algebraic system given by a Dirac structure or Lagrangian subbundle have the natural structure of an \textbf{Int}-sheaf.

\begin{lemma}
Let $\pi:E\to M$ be a vector bundle, and let $\mathcal{D}\leq E\oplus E^*$ be a Dirac structure. Define, for each $t_0<t_1\in\mathbb{R}$,
\begin{align*}
\mathfrak{W}([t_0,t_1],\mathcal{D}) := \set{(f,e)\in\mathcal{L}^1_c([t_0,t_1],E\oplus E^*)\,\big\vert\,
(f,e)\in\mathcal{D}~\mathrm{in~the~weak~sense}}
\end{align*}
and consider the restriction of functions as restriction morphisms. $\mathfrak{W}(\cdot,\mathcal{D})$ is an \textbf{Int}-sheaf. The same holds true if $\mathcal{D}$ is a Lagrangian subbundle.
\end{lemma}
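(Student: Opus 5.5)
We need restriction well-defined and the gluing property; the plan below handles each in turn, and the Lagrangian case is identical with the minus sign.

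\textbf{Restriction.} Let $(f,e)\in\mathfrak{W}([t_0,t_1],\mathcal{D})$ with shadow $x$, and let $[c,d]\subseteq[t_0,t_1]$. By Lemma~\ref{lem:weak_sheaves}, the restriction of $(f,e)$ to $[c,d]$ lies in $\mathcal{L}^1_c([c,d],E\oplus E^*)$. Take a test section $(\varphi,\varepsilon)\in\mathcal{C}_c([c,d],x\vert_{[c,d]}^*\mathcal{D})$. Its support is a compact subset of $(c,d)$, so extending it by zero gives an element of $\mathcal{C}_c([t_0,t_1],x^*\mathcal{D})$; this is the cosheaf structure mentioned in the preceding remark. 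The integral over $[t_0,t_1]$ of the extended section equals the integral over $[c,d]$ of the original, and the former vanishes. So the restriction satisfies $(f,e)\in\mathcal{D}$ in the weak sense. Properties (a) and (b) are automatic for restriction of functions.

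\textbf{Gluing.} Let $a\le c<d\le b$, with $(f,e)$ weak solutions on $[a,d]$ and $(f',e')$ weak solutions on $[c,b]$ that agree on $[c,d]$. Lemma~\ref{lem:weak_sheaves} gives a unique glued curve $\zeta\in\mathcal{L}^1_c([a,b],E\oplus E^*)$ with continuous shadow $z$; uniqueness in $\mathfrak{W}$ follows from uniqueness there. It remains to show that $\zeta$ is a weak solution.

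To do so, fix a test section $(\varphi,\varepsilon)\in\mathcal{C}_c([a,b],z^*\mathcal{D})$. Choose a continuous partition of unity $\rho_1+\rho_2=1$ on $[a,b]$ with $\rho_1$ supported in $[a,d')$ for some $d'\in(c,d)$, and $\rho_2$ supported in $(c',b]$ for some $c'\in(c,d')$. For example, take $\rho_1$ piecewise linear, equal to $1$ on $[a,c']$ and to $0$ on $[d',b]$. Because $\mathcal{D}$ is a subbundle, $\rho_i(\varphi,\varepsilon)$ is again a continuous section of $z^*\mathcal{D}$. Its support lies in $\mathrm{supp}(\varphi,\varepsilon)\cap\mathrm{supp}\,\rho_i$, which is a compact subset of $(a,d)$ for $i=1$ and of $(c,b)$ for $i=2$. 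Hence $\rho_1(\varphi,\varepsilon)$ restricts to a test section on $[a,d]$, where $z=x$, and $\rho_2(\varphi,\varepsilon)$ restricts to a test section on $[c,b]$, where $z=x'$. The integrand in the weak condition is linear in the test section, so the integral over $[a,b]$ splits as the sum of the integrals for the two pieces. Each piece vanishes by the weak condition of $(f,e)$, respectively $(f',e')$, since the integrals over $[a,b]$ of zero-extended sections coincide with the integrals over the subintervals. Integrability of every term follows from $\zeta\in\mathcal{L}^1_c$ and boundedness of continuous sections on compact intervals.

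\textbf{Main point of care.} The only delicate step is the support bookkeeping in the gluing. The cut-offs must be chosen so that each piece has compact support strictly inside the open subinterval, including at the interior endpoints $d$ and $c$. Continuity of the multiplied section must also be preserved, which holds because $\rho_i$ is a continuous scalar function. Both conditions are ensured by choosing $c<c'<d'<d$. The case $c=d$ is excluded by the non-degeneracy $c<d$.
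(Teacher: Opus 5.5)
Your proof is correct and takes essentially the same route as the paper. You glue in $\mathcal{L}^1_c$ via Lemma~\ref{lem:weak_sheaves}, then split an arbitrary test section with a two-piece cut-off subordinate to the overlapping intervals, so that each piece is a test section for one of the given weak solutions. The differences are minor: you use a piecewise-linear rather than a smooth cut-off, which suffices because test sections are only required to be continuous, and you also check that restriction preserves weak solutions (via extension by zero), a step the paper leaves implicit.
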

\begin{proof}
By Lemma~\ref{lem:weak_sheaves}, $\mathcal{L}^1_c(\cdot,E\oplus E^*)$ is an \textbf{Int}-sheaf. Let $t_0\leq t_0'<t_1'\leq t_1$, and consider $(f,e)\in\mathfrak{W}([t_0,t_1'],\mathcal{D})$ and $(f',e')\in\mathfrak{W}([t_0',t_1],\mathcal{D})$ so that $(f,e)\vert_{[t_0',t_1']} = (f',e')\vert_{[t_0',t_1']}$. The function
\begin{align*}
(\overline{f},\overline{g}):[t_0,t_1]\to E\oplus E^*,\qquad t\mapsto \begin{cases}
(f,e)(t), & t\in [t_0,t_1'],\\
(f',e')(t), & t\in [t_0',t_1]
\end{cases}
\end{align*}
is the (unique) glueing of $(f,e)$ and $(f',e')$ in $\mathcal{L}^1_c(\cdot,E\oplus E^*)$ and it remains to verify that $(\overline{f},\overline{g})\in \mathfrak{W}([t_0,t_1],\mathcal{D})$. Let $(\varphi,\varepsilon)\in\mathcal{C}_c^\infty([t_0,t_1],\mathcal{D})$ and consider a smooth function $\lambda_0\in\mathcal{C}^\infty(\mathbb{R})$ with
\begin{align*}
\lambda_0\vert_{\big[t_1'-\frac{1}{3}(t_1'-t_0'),\infty\big)}\equiv 1~\mathrm{and}~\lambda_0\vert_{\big(-\infty,t_0'+\frac{1}{3}(t_1'-t_0')\big]}\equiv 0.
\end{align*}
Put $\lambda := 1-\lambda_0\vert_{[t_0,t_1]}$ and $\lambda':=\lambda_0\vert_{[t_0,t_1]}$. Then, $\big(\lambda(\varphi,\varepsilon)\big)\vert_{[t_0,t_1']}\in\mathcal{C}_c^\infty([t_0,t_1'],\mathcal{D})$ and $\big(\lambda'(\varphi,\varepsilon)\big)\vert_{[t_0',t_1]}\in\mathcal{C}_c^\infty([t_0',t_1],\mathcal{D})$ so that
\begin{align*}
& \int_{[t_0,t_1]} \langle \varepsilon_{x(t)},\overline{f}(t)\rangle+\langle \overline{e}(t),\varphi_{x(t)}\rangle\,\mathrm{d}t\\
&\qquad = \int_{[t_0,t_1]} \langle (\lambda(t)+\lambda'(t))\varepsilon_{x(t)},\overline{f}(t)\rangle+\langle \overline{e}(t),(\lambda(t)+\lambda'(t))\varphi_{x(t)}\rangle\,\mathrm{d}t\\
&\qquad = \int_{[t_0,t_1']} \langle \lambda(t)\varepsilon_{x(t)},{f}(t)\rangle+\langle {e}(t),\lambda(t)\varphi_{x(t)}\rangle\,\mathrm{d}t\\
&\qquad\qquad + \int_{[t_0',t_1]} \langle \lambda'(t)\varepsilon_{x(t)},{f'}(t)\rangle+\langle {e'}(t),\lambda'(t)\varphi_{x(t)}\rangle\,\mathrm{d}t\\
&\qquad = 0.
\end{align*}
This verifies that $(\overline{f},\overline{g})\in \mathfrak{W}([t_0,t_1],\mathcal{D})$. For the case of Lagrangian subbundles, the pseudo-Euclidean metric in the integral is replaced with the canonical symplectic form.
\end{proof}

This gives the weak behaviour as the weak analogue of the classical behaviour, defined as follows.

\begin{definition-proposition}
Let $(\pi:E_r\oplus E_p\to M,\mathcal{D},\mathcal{R},H)$ be the geometrical data of the port-Hamiltonian system
\begin{equation}\label{eq:pHS_weak}
\begin{aligned}
\left(\tfrac{\mathrm{d}}{\mathrm{d}t}x,f,-y,\mathrm{d}H_x,e,u\right) & \in\mathcal{D}_x\\
(f,e) & \in\mathcal{R}_x
\end{aligned}
\end{equation}
Define, for each $t_0<t_1\in\mathbb{R}$, $\mathfrak{B}^{\mathrm{w}}_{\mathcal{D},\mathcal{R},H}([t_0,t_1])\subseteq\mathcal{L}^1([t_0,t_1],E_r\oplus E_p\oplus E_r^*\oplus E_p^*)$ as
\begin{align*}
\set{(f,y,e,u)\left\vert\!\begin{array}{l}
x = (f,y)^*\pi\in\big(\mathcal{C}\cap\mathcal{W}^{1,1}\big)([t_0,t_1],M),\\
\eqref{eq:pHS_classical}~\mathrm{satisfied~in~the~weak~sense}
\end{array}\right.\!\!}
\end{align*}
and consider the restriction of functions as restriction morphisms. $\mathfrak{B}^{\mathrm{w}}_{\mathcal{D},\mathcal{R},H}$ is an \textbf{Int}-sheaf, the \textit{weak behaviour} of~\eqref{eq:pHS_weak}.
\end{definition-proposition}
\begin{proof}
Since the glueing (and restriction) in $\mathcal{C}^1(\cdot,M)$ is defined by the glueing (restriction) in $\mathcal{C}(\cdot,M)$ and since~\eqref{eq:pHS_classical} is supposed to be satisfied pointwise, the glueing (restriction) of elements of $\mathfrak{B}^{\mathrm{c}}_{\mathcal{D},\mathcal{R},H}$ in $\mathcal{C}(\cdot,E_r\oplus E_p\oplus E_r^*\oplus E_p^*)$ remains in $\mathfrak{B}^{\mathrm{c}}_{\mathcal{D},\mathcal{R},H}$. This verifies that $\mathfrak{B}^{\mathrm{c}}_{\mathcal{D},\mathcal{R},H}$ is indeed an \textbf{Int}-sheaf.
\end{proof}

\section{Interconnection of port-Hamiltonian systems}

In the previous section, we clarified the definitions of classical and weak solutions of port-Hamiltonian systems. The natural structural properties of the collection of all solutions is encoded in the language of \textbf{Int}-sheaves. In this section, we may finally study the interconnection of port-Hamiltonian systems. First, we shall recall the geometric foundation, which facilitates the claim that interconnection of port-Hamiltonian system is structure preserving. 

\subsection{Composition of Dirac structures}\label{geometric_approach}

The composition of Dirac structures is the threefold composition of Dirac structures as relations. It is well-known that this produces, in the constant case, again a Dirac structure, cf.~\cite{Interconnection07,Interconnection18}. It is noteworthy that~\cite{Interconnection07} considers \textit{per se} a particular interconnection only, while~\cite{Interconnection18} presents the general statement. A quick calculation reveals, however, that both results are, in fact, equivalent, and that it suffices to consider the interconnection as a feedback relation (by a Dirac structure) over the external port-variables of a single system. To obtain from that result the classical interconnection of two (or finitely many) port-Hamiltonian systems, the single system is the composition. We first recall the statement in the constant case and provide a sketch of a proof, which is based on the proof of~\cite[Lemma 5.2]{Burs13}.

\begin{proposition}\label{prop:geometric_composition}
Let $V,W$ be real, finite-dimensional vector spaces, and let ${D}_1\leq (V\oplus W)\oplus (V^*\oplus W^*)$ and ${D}_2\leq W\oplus W^*$ be (constant) Dirac structures.
\begin{align*}
{D}_2\circ{D}_2 := \set{(v,\varphi)\in V\oplus V^*\,\big\vert\,\exists (w,\psi)\in{D}_2: (v,w,\varphi,\psi)\in D_1}
\end{align*}
is a Dirac structure.
\end{proposition}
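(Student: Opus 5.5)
The plan is to use the standard characterisation of Dirac structures in the constant case. For a finite-dimensional $U$, the pairing $\llangle\cdot,\cdot\rrangle$ on $U\oplus U^*$ is nondegenerate of split signature. Hence a subspace $D\leq U\oplus U^*$ is a Dirac structure (isotropic of dimension $\dim U$) if, and only if, $D = D^{\perp}$, where $\perp$ is taken with respect to $\llangle\cdot,\cdot\rrangle$. Indeed, isotropy means $D\subseteq D^\perp$, and $\dim D + \dim D^\perp = 2\dim U$, so $D=D^\perp$ forces $\dim D=\dim U$. I would therefore prove two things for the composition $D:=\set{(v,\varphi)\,\vert\,\exists (w,\psi)\in D_2: (v,w,\varphi,\psi)\in D_1}$: that $D\subseteq D^\perp$, and that $D^\perp\subseteq D$.

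\emph{Isotropy.} Let $(v,\varphi),(v',\varphi')\in D$ with witnesses $(w,\psi),(w',\psi')\in D_2$. The pairing on $(V\oplus W)\oplus(V^*\oplus W^*)$ splits as the sum of the pairings on $V\oplus V^*$ and $W\oplus W^*$. Isotropy of $D_1$ then gives
\begin{align*}
0 = \llangle (v,w,\varphi,\psi),(v',w',\varphi',\psi')\rrangle = \llangle (v,\varphi),(v',\varphi')\rrangle + \llangle (w,\psi),(w',\psi')\rrangle .
\end{align*}
The second summand vanishes because $D_2$ is isotropic, so the first summand vanishes as well.

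\emph{Maximality.} This is the main step. I would rewrite $D$ as a projection of an intersection inside $P := (V\oplus W)\oplus(V^*\oplus W^*)$. Put $A := D_1$ and $B := \set{(v,w,\varphi,\psi)\,\vert\,(w,\psi)\in D_2}$, so that $D = \pi_{V\oplus V^*}(A\cap B)$. Both subspaces are self-orthogonal. For $A$ this is the hypothesis. For $B$, its orthogonal complement has zero $V\oplus V^*$-component, since the pairing on $V\oplus V^*$ is nondegenerate, and its $W\oplus W^*$-component lies in $D_2^\perp=D_2$; thus $B^\perp = 0\oplus D_2$.

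Now let $(a,\alpha)\in D^\perp$. Then $(a,0,\alpha,0)$ is orthogonal to every element of $A\cap B$, because its pairing with such an element only sees the $V$-components, which lie in $D$. In finite dimensions with a nondegenerate form we have $(A\cap B)^\perp = A^\perp + B^\perp$, and hence
\begin{align*}
(a,0,\alpha,0) = x + (0,w,0,\psi)\quad\text{with } x\in D_1,\ (w,\psi)\in D_2 .
\end{align*}
It follows that $x = (a,-w,\alpha,-\psi)\in D_1$, and $(-w,-\psi)\in D_2$ by linearity. This exhibits $(a,\alpha)\in D$.

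The only point requiring care is the identity $(A\cap B)^\perp = A^\perp+B^\perp$, which follows from $(X+Y)^\perp = X^\perp\cap Y^\perp$ together with $X^{\perp\perp}=X$ for a nondegenerate form in finite dimensions. The rest is bookkeeping of signs, which is harmless because both $D_1$ and $D_2$ are linear subspaces.
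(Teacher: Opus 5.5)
Your proof is correct, and it takes a somewhat different route from the paper's sketch.

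The paper regards $K := 0\oplus D_2$ as an isotropic subspace of $P$. Its orthogonal complement is your $B$ (the paper's $D_2^{\bbot}$). The paper then invokes linear reduction in the form of Bursztyn's Lemma~5.2: for $L$ Lagrangian and $K$ isotropic, $(L\cap K^{\perp})+K$ is Lagrangian. It identifies $(D_1\cap B)+K$ with the product $(D_1\circ D_2)\oplus D_2$ and splits off the factor $D_2$. That last step uses the fact that a product of subspaces of the orthogonal summands $V\oplus V^*$ and $W\oplus W^*$ is Lagrangian if and only if each factor is.

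You instead stay in $V\oplus V^*$ throughout. Isotropy comes from the additive splitting of the pairing, and maximality from $(A\cap B)^{\perp}=A^{\perp}+B^{\perp}$. Since $(L\cap K^{\perp})^{\perp}=L+K$ is precisely the identity behind the reduction lemma, both proofs rest on the same linear-algebra fact.

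What each buys:
\begin{itemize}
\item Your version dispenses with the auxiliary Lagrangian in $P$ and the splitting step. It also spells out the ``direct calculation'' that the paper leaves implicit.
\item The paper's version places the result within linear reduction. That is the viewpoint behind the description $D_1\circ D_2\cong (D_1\cap B)/(D_1\cap K)$, which the paper uses later for the clean-intersection criterion in the non-constant case.
\end{itemize}

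One wording slip: your sentence ``Both subspaces are self-orthogonal'' is false for $B$, since $B^{\perp}=0\oplus D_2\neq B$. What your argument actually uses, and correctly computes, is $B^{\perp}=0\oplus D_2$.
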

\begin{proof}
Identify $D_2$ with $\set{(0,w,0,\psi)\in(V\oplus W)\oplus (V^*\oplus W^*)\,\big\vert\,(w,\psi)\in D_2}$. The annihilator with respect to the canonical pseudo-Euclidean metric is $D_2^\bbot = \set{(v,w,\varphi,\psi)\in(V\oplus W)\oplus (V^*\oplus W^*)\,\big\vert\,(w,\psi)\in D'}$. A direct calculation gives that $((D_1\cap D_2^\bbot)+D_2^\bbot = (D_1\cap D_2^\bbot)+D_2$ is a Dirac structure, which coincides with the direct sum
\begin{align*}
{D}_1\circ{D}_2\boxplus D_2 := \set{(v,w,\varphi,\psi)\,\big\vert\,(v,\varphi)\in{D}_2\circ{D}_2, (w,\psi)\in D_1}.
\end{align*}
This is the case if, and only if, both ${D}_1\circ{D}_2$ and $D_2$ are Dirac structures.
\end{proof}

The consequence of this result is the classical composition of Dirac structures. In particular, since non-constant Dirac structures are fibrewise constant Dirac structures, the following result is obtained.

\begin{corollary}
Let $E_1\to M$, $E_2\to M$ and $F_1\to N$, $F_2\to N$ be vector bundles, and let $\mathcal{D}_1\leq (E_1\oplus E_2)\oplus(E_1^*\oplus E_2^*)$ and $\mathcal{D}_2\leq (F_1\oplus F_2)\oplus(F_1^*\oplus F_2^*)$ be Dirac structures. Given a third Dirac structure $\mathcal{D}_{\mathrm{int}}\leq (E_2\times F_2)\oplus(E_2^*\times F_2^*)$, the composition
\begin{align*}
\mathcal{D}_1\Vert_{\mathcal{D}_{\mathrm{int}}}\mathcal{D}_2 := \set{(e_1,f_1,\varepsilon_1,\varphi_1)\,\left\vert\,\exists (e_2,f_2,\varepsilon_2,\varphi_2)\in\mathcal{D}_{\mathrm{int}}:\begin{array}{l}
(e_1,e_2,\varepsilon_1,\varepsilon_2)\in\mathcal{D}_1,\\
(f_1,f_2,\varphi_1,\varphi_2)\in\mathcal{D}_2
\end{array}\right.\!\!}
\end{align*}
is a regular, but not necessarily smooth, generalised subbundle, which is fibrewise a constant Dirac structure.
\end{corollary}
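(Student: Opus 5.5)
The plan is to reduce the corollary to Proposition~\ref{prop:geometric_composition}, applied fibre by fibre. The composition lives over $M\times N$ (with $\mathcal{D}_{\mathrm{int}}$ understood over $M\times N$ via the external products $E_2\times F_2$). So I first pull back $\mathcal{D}_1$ and $\mathcal{D}_2$ along the projections $M\times N\to M$ and $M\times N\to N$. Their external sum $\mathcal{D}_1\times\mathcal{D}_2$ is then a subbundle of $\big((E_1\times F_1)\oplus(E_2\times F_2)\big)\oplus\big((E_1\times F_1)^*\oplus(E_2\times F_2)^*\big)$. It has rank $\rk E_1+\rk E_2+\rk F_1+\rk F_2$ and is isotropic, because the pseudo-Euclidean metric splits as an orthogonal sum. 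Hence it is a Dirac structure.

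Next fix a point $(p,q)\in M\times N$. I set $V:=(E_1)_p\oplus(F_1)_q$ and $W:=(E_2)_p\oplus(F_2)_q$, and take
\[
D_1:=(\mathcal{D}_1)_p\times(\mathcal{D}_2)_q, \qquad D_2:=(\mathcal{D}_{\mathrm{int}})_{(p,q)} .
\]
After reordering coordinates, the fibre $(\mathcal{D}_1\Vert_{\mathcal{D}_{\mathrm{int}}}\mathcal{D}_2)_{(p,q)}$ is exactly the relational composition appearing in Proposition~\ref{prop:geometric_composition}. That proposition therefore gives that every fibre is a (constant) Dirac structure in $V\oplus V^*$. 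In particular, every fibre has dimension $\rk E_1+\rk F_1$.

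It then follows that the composition is a generalised subbundle of constant rank, i.e.\ it is regular. For the claim "not necessarily smooth", the obstacle is that constant rank alone does not give local smooth frames. The composition is the projection of the intersection $(\mathcal{D}_1\times\mathcal{D}_2)\cap\mathcal{D}_{\mathrm{int}}^\bbot$, and the dimension of that intersection may jump. I would exhibit this with a one-dimensional example: $M=N=\mathbb{R}$ and trivial bundles, with $\mathcal{D}_1$ the graph of a skew matrix field whose entries vanish at a point. The composed fibre then switches non-smoothly, for instance from the flow-axis to the effort-axis as in Example~\ref{ex:fun_Dirac_structures}. This shows that the fibres do not assemble into a smooth subbundle.

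The main care point is the bookkeeping of base spaces. The difficult step is to confirm that the fibrewise identification with Proposition~\ref{prop:geometric_composition} respects signs and the ordering of the variables. For the counterexample, if the naive choice does not work, I would pick explicit fields such that $\ker$ of the constraint drops rank only at $x=0$.
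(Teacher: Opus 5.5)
Your proposal is correct and is essentially the paper's argument. The paper's proof consists solely of the fibrewise identity $\big(\mathcal{D}_1\Vert_{\mathcal{D}_{\mathrm{int}}}\mathcal{D}_2\big)_{(x,y)} = \big((\mathcal{D}_1)_x\boxplus(\mathcal{D}_2)_y\big)\circ(\mathcal{D}_{\mathrm{int}})_{(x,y)}$ combined with Proposition~\ref{prop:geometric_composition}, and it defers the ``not necessarily smooth'' part to the $\mu$-variant of Example~\ref{ex:sufficient_not_necessary}. Your hedged counterexample already works in its naive form, and your sign/ordering worry is vacuous because the definition involves no sign flips. Take $N$ a point, $F_1=F_2=0$, $\mathcal{D}_1$ the graph of $\left[\begin{smallmatrix}0 & t\\ -t & 0\end{smallmatrix}\right]$ over $M=\mathbb{R}$, and the constant $\mathcal{D}_{\mathrm{int}}=E_2\oplus\{0\}$: the composed fibre is $\{0\}\oplus\mathbb{R}$ for $t\neq 0$ and $\mathbb{R}\oplus\{0\}$ at $t=0$, which is even simpler than the paper's example.
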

\begin{proof}
For each $(x,y)\in M\times N$,
\begin{align*}
\big(\mathcal{D}_1\Vert_{\mathcal{D}_{\mathrm{int}}}\mathcal{D}_2\big)_{(x,y)} = \big((\mathcal{D}_1)_x\boxplus(\mathcal{D}_2)_y\big)\circ(\mathcal{D}_{\mathrm{int}})_{(x,y)}.
\end{align*}
\end{proof}

Combining the results of~\cite{Burs13} on sufficient conditions of smoothness of forward and backward of Dirac structures with the insight of~\cite{Interconnection18} on the intimate connection between those and composition of Dirac structures gives a sufficient condition on smoothness. This sufficient condition is the following well-known \textit{clean intersection} condition, cf.~\cite[Proposition 3.15]{Vyso20}.

\begin{proposition}
Let $E\to M$ and $F\to M$ be vector bundles over the same manifold, and let $\mathcal{D}_1\leq (E\oplus F)\oplus(E^*\oplus F^*)$ and $\mathcal{D}_2\leq F\oplus F^*$ be Dirac structures. If
\begin{align}\label{eq:intersection}
\mathcal{D}_1\cap\mathcal{D}_2 = \set{(0_E(x),v,0_{E^*}(x),w)\in\mathcal{D}_1\,\big\vert\,(v,w)\in\mathcal{D}_2}
\end{align}
has constant rank, then $\mathcal{D}_1\circ\mathcal{D}_2$ is a Dirac structure.
\end{proposition}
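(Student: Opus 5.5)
We already know from Proposition~\ref{prop:geometric_composition} that $\mathcal{D}_1\circ\mathcal{D}_2$ is fibrewise a Dirac structure, so it is pointwise isotropic with $\dim(\mathcal{D}_1\circ\mathcal{D}_2)_x = \rk E$ for every $x\in M$. The plan is therefore to show only that $\mathcal{D}_1\circ\mathcal{D}_2$ is a smooth subvector bundle. We will realise it as the image of a smooth constant-rank vector bundle morphism.

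\textbf{Step 1: the intersection $K$ is a smooth subbundle.} Let $\widehat{\mathcal{D}}_2$ denote $\mathcal{D}_2$ embedded into $(E\oplus F)\oplus(E^*\oplus F^*)$ via $(v,w)\mapsto(0,v,0,w)$. This is a smooth subbundle of constant rank $\rk F$. The intersection~\eqref{eq:intersection} is $K:=\mathcal{D}_1\cap\widehat{\mathcal{D}}_2$. Locally we write $\mathcal{D}_1$ and $\widehat{\mathcal{D}}_2$ as kernels of smooth surjective matrix fields $A_1$ and $A_2$ (kernel representations, Remark~\ref{rem:local_reps}). Then $K=\ker\begin{bmatrix}A_1\\A_2\end{bmatrix}$ locally. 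A smooth matrix field of constant rank has a smooth kernel bundle, so the constant rank hypothesis makes $K$ a smooth subbundle. Consequently $S:=\mathcal{D}_1\cap\widehat{\mathcal{D}}_2^{\bbot}$ is also smooth. To see this, note that $\mathcal{D}_1+\widehat{\mathcal{D}}_2$ has constant rank $\rk\mathcal{D}_1+\rk\mathcal{D}_2-\rk K$. By the constant rank theorem it is a smooth subbundle, and so is its annihilator $(\mathcal{D}_1+\widehat{\mathcal{D}}_2)^{\bbot}=\mathcal{D}_1^{\bbot}\cap\widehat{\mathcal{D}}_2^{\bbot}=\mathcal{D}_1\cap\widehat{\mathcal{D}}_2^{\bbot}$.

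\textbf{Step 2: write the composition as an image.} By definition, $\mathcal{D}_1\circ\mathcal{D}_2$ consists of the pairs $(v,\varphi)$ with $(v,w,\varphi,\psi)\in\mathcal{D}_1$ for some $(w,\psi)\in\mathcal{D}_2$. Let $\Phi:(E\oplus F)\oplus(E^*\oplus F^*)\to E\oplus E^*$ be the smooth projection. Then $\mathcal{D}_1\circ\mathcal{D}_2=\Phi(\mathcal{D}_1\cap\widetilde{\mathcal{D}}_2)$, where $\widetilde{\mathcal{D}}_2:=(E\oplus E^*)\oplus\mathcal{D}_2$, up to reordering of the factors. Since $\widetilde{\mathcal{D}}_2=\widehat{\mathcal{D}}_2^{\bbot}$, this says $\mathcal{D}_1\circ\mathcal{D}_2=\Phi(S)$, with $S$ smooth by Step 1.

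\textbf{Step 3: the image has constant rank.} The kernel of $\Phi\vert_S$ consists of the elements of $S$ with vanishing $E$- and $E^*$-components, which is exactly $K$. Hence $\Phi\vert_S$ is a smooth morphism of vector bundles with constant-rank kernel. Its image is then a smooth subbundle, because locally the images of a smooth frame of a complement of $K$ in $S$ form a smooth frame of the image. We thus obtain a smooth subbundle that is fibrewise Dirac, which is a Dirac structure.

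The main obstacle is Step 1, namely passing from constant rank of $K$ to smoothness of $\mathcal{D}_1\cap\widehat{\mathcal{D}}_2^{\bbot}$. This relies on the identity $(\mathcal{D}_1+\widehat{\mathcal{D}}_2)^{\bbot}=\mathcal{D}_1\cap\widehat{\mathcal{D}}_2^{\bbot}$, which uses $\mathcal{D}_1^{\bbot}=\mathcal{D}_1$ and the nondegeneracy of $\llangle\cdot,\cdot\rrangle$. It also relies on the fact that sums and annihilators of smooth subbundles with constant rank are smooth. Both facts are standard linear algebra applied fibrewise, together with the constant rank theorem.
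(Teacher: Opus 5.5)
Your proof is correct and follows the same route as the paper. Both arguments get smoothness of $K=\mathcal{D}_1\cap\widehat{\mathcal{D}}_2$ from the constant-rank hypothesis, then smoothness of $S=\mathcal{D}_1\cap\widehat{\mathcal{D}}_2^{\bbot}$, and then identify $\mathcal{D}_1\circ\mathcal{D}_2$ with $S/K$, which you realise concretely as $\Phi(S)$ with $\ker\Phi\vert_S=K$. The one variation is how $S$ is handled: the paper counts $\rk S=\rk K+\rk E$ using the pointwise Dirac property of the composition, whereas you obtain $S$ as the annihilator of the constant-rank sum $\mathcal{D}_1+\widehat{\mathcal{D}}_2$; your image formulation also spells out what the paper's terse ``the isomorphism is smooth'' step means.
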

\begin{proof}
Recall that the intersection of subvector bundles is a subvector bundle if, and only if, it has constant rank~\cite[Theorem 10.34]{Lee13}, and observe further
\begin{align*}
\forall x\in M: \big(\mathcal{D}_1\circ\mathcal{D}_2\big)_x \simeq \big(\big(\mathcal{D}_1\cap\mathcal{D}_2^\bbot\big)/\big(\mathcal{D}_1\cap\mathcal{D}_2\big)\big)_x,
\end{align*}
where $\mathcal{D}_1\cap\mathcal{D}_2^{\bbot} = \set{(f,v,e,w)\in\mathcal{D}_1\,\big\vert\,(v,w)\in\mathcal{D}_2}$. Since $\mathcal{D}_1\cap\mathcal{D}_2$ has constant rank and $\mathcal{D}_1\circ\mathcal{D}_2$ has constant rank, $\mathcal{D}_1\cap\mathcal{D}_2^\bbot$ has constant rank. Therefore, both $\mathcal{D}_1\cap\mathcal{D}_2$ and $\mathcal{D}_1\cap\mathcal{D}_2^\bbot$ are subvector bundles. Since the isomorphism $\mathcal{D}_1\circ\mathcal{D}_2 \simeq \big(\mathcal{D}_1\cap\mathcal{D}_2^\bbot\big)/\big(\mathcal{D}_1\cap\mathcal{D}_2\big)$ is smooth, this implies that $\mathcal{D}_1\circ\mathcal{D}_2$ is smooth and thus indeed a Dirac structure.
\end{proof}

The clean intersection condition is sufficient, but not necessary. We demonstrate this with an example.

\begin{example}\label{ex:sufficient_not_necessary}
Let $\lambda\in\mathcal{C}^\infty(\mathbb{R})$ and consider the smooth antisymmetric matrix field
\begin{align*}
J_{\lambda}:\mathbb{R}\to\mathbb{R}^{3\times 3},\qquad t\mapsto\begin{bmatrix}
0 & \lambda(t) & \lambda(t)^2\\
-\lambda(t) & 0 & 0\\
-\lambda(t)^2 & 0 & 0
\end{bmatrix}.
\end{align*}
The induced Dirac structure in the trivial bundle $p_1:\mathbb{R}\times\mathbb{R}^3\to\mathbb{R}$ is
\begin{align*}
\mathcal{D}_{1} := \set{(t,X,J_{\lambda}(t)X)\,\big\vert\,X\in\mathbb{R}^3,t\in\mathbb{R}}.
\end{align*}
Consider furthermore the Dirac structure
\begin{align*}
\mathcal{D}_{2} := \set{\left.\left(t,\begin{pmatrix}
x\\y
\end{pmatrix},\begin{pmatrix}
\lambda(t)^2y\\-\lambda(t)^2x
\end{pmatrix}\right)\,\right\vert\,t,x,y\in\mathbb{R}}.
\end{align*}
We show that $\mathcal{D}_{1}\circ\mathcal{D}_{2}$ is a Dirac structure. Let $t\in\mathbb{R}$. Then, we have $(t,a,b)\in\mathcal{D}_{1}\circ\mathcal{D}_{2}$ if, and only if, there exists some $x,y\in\mathbb{R}$ with 
\begin{align*}
\lambda(t)^2y & = -\lambda(t)a\\
\lambda(t)^2x & = \lambda(t)^2a\\
b & = \lambda(t)x+\lambda(t)^2y.
\end{align*}
Plugging the first two equations into the third gives
\begin{align*}
b & = \lambda(t)x+\lambda(t)^2y = \lambda(t)x-\lambda(t)a = \left.\begin{cases}
0, & \lambda(t) = 0,\\
\lambda(t)a-\lambda(t)a, & \lambda(t)\neq 0
\end{cases}\right\rbrace = 0.
\end{align*}
Therefore, $\mathcal{D}_{1}\circ\mathcal{D}_{2} = \mathbb{R}\times(\mathbb{R}\oplus\set{0})$ is indeed a Dirac structure. The intersection $\big(\mathcal{D}_1\cap\mathcal{D}_2\big)_t$ can be calculated by considering $a = b = 0$ so that
\begin{align*}
\forall t\in\mathbb{R}: \big(\mathcal{D}_1\cap\mathcal{D}_2\big)_t = \begin{cases}
\set{(0,x,y,0,0,0)\,\big\vert\,x,y\in\mathbb{R}}, & \lambda(t) = 0,\\
\set{0}, & \lambda(t) \neq 0.
\end{cases}
\end{align*}
In particular, in case that $\lambda$ vanishes neither nowhere nor everywhere, the clean intersection condition is not satisfied, but $\mathcal{D}_{1}\circ\mathcal{D}_{2}$ is smooth.

Replacing $\lambda^2$ with another smooth function $\mu$ which has a zero that is not a zero of $\lambda$ and does not vanishes everywhere gives an example of Dirac structures whose composition is not smooth:
\begin{align*}
\forall t\in\mathbb{R}: \big(\mathcal{D}_{1}\circ\mathcal{D}_{2}\big)_t = \begin{cases}
\set{0}\oplus\mathbb{R}, & \mu(t) = 0, \lambda(t)\neq 0,\\
\mathbb{R}\oplus\set{0}, & \mathrm{else}.
\end{cases}
\end{align*}
\end{example}

We define an \textit{admissible interconnection constraint} for port-Hamiltonian systems by requiring that the composition of the Dirac structures is smooth. The clean intersection condition is a sufficient, but not necessary, condition for admissibility. This gives the formal definition of the geometric interconnection of port-Hamiltonian systems as follows.

\begin{definition}
Let $E_s^j\to M_j$, $E_r^j\to M_j$, $E_{p_i}^j\to M_j$, $E_{p_e}^j\to M_j$ be smooth vector bundles, and let $\mathcal{D}_j\leq \big(E_s^j\oplus E_r^j\oplus E_{p_i}^j\oplus E_{p_e}^j\big)\oplus\big((E_s^j)^*\oplus (E_r^j)^*\oplus (E_{p_i}^j)^*\oplus (E_{p_e}^j)^*\big)$ be Dirac structures, $j\in\set{1,2}$. 
\begin{enumerate}[(i)]
\item A Dirac structure $\mathcal{D}_{\mathrm{int}}\leq \big(E_{p_e}^1\oplus E_{p_e}^2\big) \big((E_{p_e}^1)^*\oplus (E_{p_e}^2)^*\big)$ is an \textit{admissible interconnection constraint} for the pair $(\mathcal{D}_1,\mathcal{D}_2)$ if, and only if, $\mathcal{D}_1\Vert_{\mathcal{D}_{\mathrm{int}}}\mathcal{D}_2$ is smooth.
\item If $\mathcal{D}_{\mathrm{int}}$ is an admissible interconnection constraint, the \textit{geometric interconnection} of the port-Hamiltonian systems
\begin{align*}
\begin{pmatrix}
\tfrac{\mathrm{d}}{\mathrm{d}t}x_j, f_j, -y_j^1, -y_j^2, \mathrm{d}H_j, e_j, u_j^1, u_j^2
\end{pmatrix} \in\mathcal{D}_j,\quad \begin{pmatrix}
f_j,e_j
\end{pmatrix}\in\mathcal{R}_j, \qquad j\in\set{1,2}
\end{align*}
with respect to the interconnection constraint
\begin{align*}
\begin{pmatrix}
-y_1^2, -y_2^2, u_1^2, u_2^2
\end{pmatrix}\in\mathcal{D}_{\mathrm{int}}
\end{align*}
is the port-Hamiltonian system
\begin{equation*}
\begin{aligned}
\begin{pmatrix}
\tfrac{\mathrm{d}}{\mathrm{d}t}x_1,\tfrac{\mathrm{d}}{\mathrm{d}t}x_2, f_1, f_2, -y_1^1, -y_2^1, \mathrm{d}H_1, \mathrm{d}H_2, e_1, e_2, u_1^1, u_2^1
\end{pmatrix} & \in\mathcal{D}_1\Vert_{\mathcal{D}_{\mathrm{int}}}\mathcal{D}_2,\\
\begin{pmatrix}
f_1,f_2,e_1,e_2
\end{pmatrix} & \in\mathcal{R}_1\boxplus\mathcal{R}_2.
\end{aligned}
\end{equation*}
\end{enumerate}
\end{definition}

\subsection{Behavioural approach}\label{sec:behavioural_approach}

In the geometric interconnection, the definition of the composition of Dirac structures guarantees that the equations are pointwise identical. The external port-variables on which the interconnection constraints are imposed, however, only remain implicit and pointwise. In particular, for each element of the interconnected system, the existence of a general curve realising the interconnection constraint is guaranteed, but no statement on the regularity can be made \textit{a priori}. From a dynamical systems' point of view, we would like to see that the systems are equivalent so that we want to impose regularity assumptions on \textit{some} curve realising the interconnection constraint. This is formalised in the \textit{behavioural interconnection}, which is given for both the classical and the weak behaviour.

\begin{definition-proposition}\label{def-prop:behavioural_interconnection}
Let 
\begin{itemize}
\item $E_s^j\to M_j$, $E_r^j\to M_j$, $E_{p_i}^j\to M_j$, $E_{p_e}^j\to M_j$ be smooth vector bundles,
\item $\mathcal{D}_j\leq \big(E_s^j\oplus E_r^j\oplus E_{p_i}^j\oplus E_{p_e}^j\big)\oplus\big((E_s^j)^*\oplus (E_r^j)^*\oplus (E_{p_i}^j)^*\oplus (E_{p_e}^j)^*\big)$ be Dirac structures,
\item $\mathcal{R}_j\leq E_r^j\oplus (E_r^j)^*$ be non-negative Lagrangian subbundles,
\item $H_j\in\mathcal{C}^\infty(M_j)$ be Hamiltonian functions,
\item $\mathcal{D}_{\mathrm{int}}\leq \big(E_{p_e}^1\oplus E_{p_e}^2\big) \big((E_{p_e}^1)^*\oplus (E_{p_e}^2)^*\big)$ be an {admissible interconnection constraint} for $(\mathcal{D}_1,\mathcal{D}_2)$.
\end{itemize}
For $\mathfrak{t}\in\set{\mathrm{c},\mathrm{w}}$, the \textit{behavioural interconnection} $\big(\mathfrak{B}^{\mathfrak{t}}_{\mathcal{D}_1,\mathcal{R}_1,H_1}\Vert_{\mathcal{D}_{\mathrm{int}}}\mathfrak{B}^{\mathfrak{t}}_{\mathcal{D}_2,\mathcal{R}_2,H_2}\big)([t_0,t_1])$ is, for each $t_0<t_1\in\mathbb{R}$, defined as
\begin{align*}
\set{(f_1,f_2,y_1^1,y_2^1,e_1,e_2,u_1^1,u_2^1):[t_0,t_1]\to\mathcal{E}\,\left\vert\,\begin{array}{l}
\exists (y_1^2,y_2^2,u_1^2,u_2^2):[t_0,t_1]\to\mathcal{D}_{\mathrm{int}}:\\
\quad (f_1,y_1^1,-y_1^2,e_1,u_1^1,u_1^2)\\
\hspace{2cm}\in \mathfrak{B}^{\mathfrak{t}}_{\mathcal{D}_1,\mathcal{R}_1,H_1}([t_0,t_1]),\\
\quad (f_2,y_2^1,-y_2^2,e_2,u_2^1,u_2^2)\\
\hspace{2cm}\in \mathfrak{B}^{\mathfrak{t}}_{\mathcal{D}_2,\mathcal{R}_2,H_2}([t_0,t_1])
\end{array}\right.}
\end{align*}
with
\begin{align*}
\mathcal{E} := \big(E_s^1\oplus E_s^2\oplus E_r^1\oplus E_r^2\oplus E_{p_i}^1\oplus E_{p_i}^2\big)\oplus\big(E_s^1\oplus E_s^2\oplus E_r^1\oplus E_r^2\oplus E_{p_i}^1\oplus E_{p_i}^2\big)^*.
\end{align*}
$\big(\mathfrak{B}^{\mathfrak{t}}_{\mathcal{D}_1,\mathcal{R}_1,H_1}\Vert_{\mathcal{D}_{\mathrm{int}}}\mathfrak{B}^{\mathfrak{t}}_{\mathcal{D}_2,\mathcal{R}_2,H_2}\big)$ is an \textbf{Int}-sheaf of functions.
\end{definition-proposition}
\begin{proof}
Since both $\mathfrak{B}_{\mathfrak{t},\mathcal{D}_j,\mathcal{R}_j}$, $j\in\set{1,2}$, are sheaves, $\big(\mathfrak{B}^{\mathfrak{t}}_{\mathcal{D}_1,\mathcal{R}_1,H_1}\Vert_{\mathcal{D}_{\mathrm{int}}}\mathfrak{B}^{\mathfrak{t}}_{\mathcal{D}_2,\mathcal{R}_2,H_2}\big)$ is a separable presheaf. Let $t_0<\vartheta_0<\vartheta_1<t_1\in\mathbb{R}$, and let 
\begin{align*}
(f_1,f_2,y_1^1,y_2^1,e_1,e_2,u_1^1,u_2^1) & \in\big(\mathfrak{B}^{\mathfrak{t}}_{\mathcal{D}_1,\mathcal{R}_1,H_1}\Vert_{\mathcal{D}_{\mathrm{int}}}\mathfrak{B}^{\mathfrak{t}}_{\mathcal{D}_2,\mathcal{R}_2,H_2}\big)([t_0,\vartheta_1]),\\
(\widehat{f}_1,\widehat{f}_2,\widehat{y}_1^1,\widehat{y}_2^1,\widehat{e}_1,\widehat{e}_2,\widehat{u}_1^1,\widehat{u}_2^1) & \in\big(\mathfrak{B}^{\mathfrak{t}}_{\mathcal{D}_1,\mathcal{R}_1,H_1}\Vert_{\mathcal{D}_{\mathrm{int}}}\mathfrak{B}^{\mathfrak{t}}_{\mathcal{D}_2,\mathcal{R}_2,H_2}\big)([\vartheta_0,t_1])
\end{align*}
satisfying
\begin{align*}
(f_1,f_2,y_1^1,y_2^1,e_1,e_2,u_1^1,u_2^1)\vert_{[\vartheta_0,\vartheta_1]} = (\widehat{f}_1,\widehat{f}_2,\widehat{y}_1^1,\widehat{y}_2^1,\widehat{e}_1,\widehat{e}_2,\widehat{u}_1^1,\widehat{u}_2^1)\vert_{[\vartheta_0,\vartheta_1]}
\end{align*}
with $(y_1^2,y_2^2,u_1^2,u_2^2):[t_0,\vartheta_1]\to\mathcal{D}_{\mathrm{int}}$ and $(\widehat{y}_1^2,\widehat{y}_2^2,\widehat{u}_1^2,\widehat{u}_2^2):[\vartheta_0,t_1]\to\mathcal{D}_{\mathrm{int}}$ realising the defining characterisation of the behavioural interconnection. As functions, there exists a unique function
\begin{align*}
(\overline{f}_1,\overline{f}_2,\overline{y}_1^1,\overline{y}_2^1,\overline{e}_1,\overline{e}_2,\overline{u}_1^1,\overline{u}_2^1):[t_0,t_1]\to\mathcal{E}
\end{align*}
so that
\begin{align*}
(f_1,f_2,y_1^1,y_2^1,e_1,e_2,u_1^1,u_2^1) = (\overline{f}_1,\overline{f}_2,\overline{y}_1^1,\overline{y}_2^1,\overline{e}_1,\overline{e}_2,\overline{u}_1^1,\overline{u}_2^1)\vert_{[t_0,\vartheta_1]},\\
(f_1,f_2,y_1^1,y_2^1,e_1,e_2,u_1^1,u_2^1) = (\overline{f}_1,\overline{f}_2,\overline{y}_1^1,\overline{y}_2^1,\overline{e}_1,\overline{e}_2,\overline{u}_1^1,\overline{u}_2^1)\vert_{[\vartheta_0,t_1]}
\end{align*}
Choose a function $\varepsilon\in\mathcal{C}^\infty([t_0,t_1])$ with $\ran\varepsilon\subseteq[0,1]$ and
\begin{align*}
\varepsilon\vert_{\left[t_0,\frac{2}{3}\vartheta_0+\frac{1}{3}\vartheta_1\right]}\equiv 0\quad\mathrm{and}\quad\varepsilon\vert_{\left[\frac{2}{3}\vartheta_1+\frac{1}{3}\vartheta_0,t_1\right]}\equiv 1.
\end{align*}
For ease of readability, denote the extensions of $\varepsilon\vert_{[\vartheta_0,t_1]}(\widehat{y}_1^2,\widehat{y}_2^2,\widehat{u}_1^2,\widehat{u}_2^2)$ by zero to $[t_0,t_1]$ by $\varepsilon(\widehat{y}_1^2,\widehat{y}_2^2,\widehat{u}_1^2,\widehat{u}_2^2)$, and define likewise $(1-\varepsilon)(y_1^2,y_2^2,u_1^2,u_2^2)$ and $\varepsilon(\widehat{f}_1,\widehat{f}_2,\widehat{y}_1^1,\widehat{y}_2^1,\widehat{e}_1,\widehat{e}_2,\widehat{u}_1^1,\widehat{u}_2^1)$ and $(1-\varepsilon)(f_1,f_2,y_1^1,y_2^1,e_1,e_2,u_1^1,u_2^1)$. Both $\varepsilon(\widehat{y}_1^2,\widehat{y}_2^2,\widehat{u}_1^2,\widehat{u}_2^2)$ and $(1-\varepsilon)(y_1^2,y_2^2,u_1^2,u_2^2)$ are, in case $\mathfrak{t} = \mathrm{c}$, continuous, and integrable when $\mathfrak{t} = \mathrm{w}$. Moreover,
\begin{align*}
\big(\varepsilon(\widehat{y}_1^2,\widehat{y}_2^2,\widehat{u}_1^2,\widehat{u}_2^2)+(1-\varepsilon)(y_1^2,y_2^2,u_1^2,u_2^2)\big)\vert_{[\vartheta_1,t_1]} & = (\widehat{y}_1^2,\widehat{y}_2^2,\widehat{u}_1^2,\widehat{u}_2^2)\vert_{[\vartheta_1,t_1]}\\
\big(\varepsilon(\widehat{y}_1^2,\widehat{y}_2^2,\widehat{u}_1^2,\widehat{u}_2^2)+(1-\varepsilon)(y_1^2,y_2^2,u_1^2,u_2^2)\big)\vert_{[t_0,\vartheta_0]} & = (y_1^2,y_2^2,u_1^2,u_2^2)\vert_{[t_0,\vartheta_0]}.
\end{align*}
Put $x_1:=\overline{f}_1^*\pi_1$ and $x_2:=\overline{f}_2^*\pi_2$ and define
\begin{align*}
(\overline{y}_1^2,\overline{y}_2^2,\overline{u}_1^2,\overline{u}_2^2) := \varepsilon(\widehat{y}_1^2,\widehat{y}_2^2,\widehat{u}_1^2,\widehat{u}_2^2)+(1-\varepsilon)(y_1^2,y_2^2,u_1^2,u_2^2): [t_0,t_1]\to\mathcal{D}_{\mathrm{int}}.
\end{align*}
Then, we obtain
\begin{align*}
& \begin{pmatrix}
\tfrac{\mathrm{d}}{\mathrm{d}t}x_1,\overline{f}_1,-\overline{y}_1^1,\overline{y}_1^2,\mathrm{d}H^1_{x_1},\overline{e}_1,\overline{u}_1^1,\overline{u}_1^2
\end{pmatrix}\\
& \hspace{3cm} = \varepsilon\begin{pmatrix}
\tfrac{\mathrm{d}}{\mathrm{d}t}x_1,\overline{f}_1,-\overline{y}_1^1,\widehat{y}_1^2,\mathrm{d}H^1_{x_1},\overline{e}_1,\overline{u}_1^1,\widehat{u}_1^2
\end{pmatrix}\\
& \hspace{3.5cm} + (1-\varepsilon)\begin{pmatrix}
\tfrac{\mathrm{d}}{\mathrm{d}t}x_1,\overline{f}_1,-\overline{y}_1^1,{y}_1^2,\mathrm{d}H^1_{x_1},\overline{e}_1,\overline{u}_1^1,{u}_1^2
\end{pmatrix}\\
& \hspace{3cm} = \varepsilon\begin{pmatrix}
\tfrac{\mathrm{d}}{\mathrm{d}t}x_1,\widehat{f}_1,-\widehat{y}_1^1,\widehat{y}_1^2,\mathrm{d}H^1_{x_1},\widehat{e}_1,\widehat{u}_1^1,\widehat{u}_1^2
\end{pmatrix}\\
& \hspace{3.5cm} + (1-\varepsilon)\begin{pmatrix}
\tfrac{\mathrm{d}}{\mathrm{d}t}x_1,{f}_1,-{y}_1^1,{y}_1^2,\mathrm{d}H^1_{x_1},{e}_1,{u}_1^1,{u}_1^2
\end{pmatrix}\in\left(\mathcal{D}_1\right)_{x_1}
\end{align*}
and analogously
\begin{align*}
\begin{pmatrix}
\tfrac{\mathrm{d}}{\mathrm{d}t}x_1,\overline{f}_2,-\overline{y}_2^1,\overline{y}_2^2,\mathrm{d}H^2_{x_2},\overline{e}_2,\overline{u}_2^1,\overline{u}_2^2
\end{pmatrix} \in\left(\mathcal{D}_2\right)_{x_2}.
\end{align*}
Since $(\overline{f}_1,\overline{y}_1^1,\overline{e}_1,\overline{u}_1^1)$ and $(\overline{f}_2,\overline{y}_2^1,\overline{e}_2,\overline{u}_2^1)$ are, in case $\mathfrak{t} = \mathrm{c}$, continuous with continuously differentiable shadow, and integrable with continuous Sobolev shadow in case $\mathfrak{t} = \mathrm{w}$, we conclude therefore
\begin{align*}
\begin{pmatrix}
\overline{f}_1,\overline{y}_1^1,-\overline{y}_1^2,\overline{e}_1,\overline{u}_1^1,\overline{u}_1^2
\end{pmatrix} & \in \mathfrak{B}^{\mathfrak{t}}_{\mathcal{D}_1,\mathcal{R}_1,H_1}([t_0,t_1]),\\
\begin{pmatrix}
\overline{f}_2,\overline{y}_2^1,-\overline{y}_2^2,\overline{e}_2,\overline{u}_2^1,\overline{u}_2^2
\end{pmatrix} & \in \mathfrak{B}^{\mathfrak{t}}_{\mathcal{D}_1,\mathcal{R}_1,H_1}([t_0,t_1]),
\end{align*}
and hence
\begin{align*}
(\overline{f}_1,\overline{f}_2,\overline{y}_1^1,\overline{y}_2^1,\overline{e}_1,\overline{e}_2,\overline{u}_1^1,\overline{u}_2^1)\in \big(\mathfrak{B}^{\mathfrak{t}}_{\mathcal{D}_1,\mathcal{R}_1,H_1}\Vert_{\mathcal{D}_{\mathrm{int}}}\mathfrak{B}^{\mathfrak{t}}_{\mathcal{D}_2,\mathcal{R}_2,H_2}\big)([t_0,t_1]).
\end{align*}
\end{proof}

By definition of the interconnection of Dirac structures, the every element of the behavioural interconnection of port-Hamiltonian systems is an element of the behaviour of the interconnected port-Hamiltonian systems.                

\begin{proposition}\label{prop:mono_but_not_epi}
With the prerequisites of Definition-Proposition~\ref{def-prop:behavioural_interconnection}, we have the inclusion
\begin{align*}
\mathfrak{B}^{\mathfrak{t}}_{\mathcal{D}_1,\mathcal{R}_1,H_1}\Vert_{\mathcal{D}_{\mathrm{int}}}\mathfrak{B}^{\mathfrak{t}}_{\mathcal{D}_2,\mathcal{R}_2,H_2}\subseteq\mathfrak{B}^{\mathfrak{t}}_{\mathcal{D}_1\Vert_{\mathcal{D}_{\mathrm{int}}}\mathcal{D}_2,\mathcal{R}_1\boxplus\mathcal{R}_2,H_1+H_2},
\end{align*}
but not necessarily identity. 
\end{proposition}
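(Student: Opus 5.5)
The statement has two parts: the inclusion, and the failure of equality. For the inclusion I would argue directly from the definitions, separately for $\mathfrak{t}=\mathrm{c}$ and $\mathfrak{t}=\mathrm{w}$.

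For $\mathfrak{t}=\mathrm{c}$, take an element of the behavioural interconnection on $[t_0,t_1]$ together with a realising curve $(y_1^2,y_2^2,u_1^2,u_2^2)$ in $\mathcal{D}_{\mathrm{int}}$.
\begin{enumerate}[(a)]
\item The shadows $x_1,x_2$ are $\mathcal{C}^1$, so $(x_1,x_2)$ is a $\mathcal{C}^1$ curve in $M_1\times M_2$. The port variables are continuous.
\item At each $t$, the pointwise memberships in $(\mathcal{D}_1)_{x_1(t)}$ and $(\mathcal{D}_2)_{x_2(t)}$, together with the interconnection constraint at $t$, witness by definition that $(\tfrac{\mathrm{d}}{\mathrm{d}t}x_1,\tfrac{\mathrm{d}}{\mathrm{d}t}x_2,f_1,f_2,-y_1^1,-y_2^1,\mathrm{d}H_1,\mathrm{d}H_2,e_1,e_2,u_1^1,u_2^1)$ lies in $\mathcal{D}_1\Vert_{\mathcal{D}_{\mathrm{int}}}\mathcal{D}_2$ at $(x_1(t),x_2(t))$.
\item Here I use $\mathrm{d}(H_1+H_2)=(\mathrm{d}H_1,\mathrm{d}H_2)$ on the product manifold.
\item The resistive relation in $\mathcal{R}_1\boxplus\mathcal{R}_2$ holds componentwise.
\end{enumerate}

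For $\mathfrak{t}=\mathrm{w}$ the same bookkeeping is needed, but weak membership must be checked against test sections of the \emph{composed} bundle. This is where admissibility (smoothness of the composition) is used, since it makes the test-section space well defined. I would first reduce to relations that hold almost everywhere. Given a test section $(\varphi,\varepsilon)$ of $(x_1,x_2)^*(\mathcal{D}_1\Vert_{\mathcal{D}_{\mathrm{int}}}\mathcal{D}_2)$, the integrand $\langle\varepsilon,f\rangle+\langle e,\varphi\rangle$ vanishes a.e. because the solution curve lies a.e. pointwise in the composition, and the composition is isotropic fibrewise by the Corollary. To get there I would prove a small lemma: for an $\mathcal{L}^1_c$ curve with continuous shadow, weak membership in a Dirac (resp.\ Lagrangian) subbundle is equivalent to a.e.\ pointwise membership. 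The direction ``weak $\Rightarrow$ a.e.'' uses local frames of $\mathcal{D}$, multiplication by scalar bump functions, and the fundamental lemma, combined with maximal isotropy $\mathcal{D}=\mathcal{D}^\bbot$. The direction ``a.e.\ $\Rightarrow$ weak'' is isotropy. With this lemma, the weak case becomes identical to the classical one, carried out almost everywhere, together with the Sobolev regularity of the product shadow.

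For the failure of equality, I would construct an explicit counterexample modelled on Example~\ref{ex:sufficient_not_necessary}, in which the realising port curve is forced to be irregular.
\begin{enumerate}[(a)]
\item Take trivial bundles over $\mathbb{R}$. Choose Dirac structures built from a smooth $\lambda$ with a zero, e.g.\ $\lambda(x)=x$, so that the composed Dirac structure is constant and smooth. The interconnected system is then simple, e.g.\ $\dot x=1$.
\item Arrange that the internal port variables are determined by a quotient such as $u=a/\lambda(x)$ off the zero set of $\lambda$, and are arbitrary on it. Along $x(t)=t$ this yields $u(t)$ behaving like $\operatorname{sgn}(t)$ or $1/t$.
\item Conclude that no continuous realising curve exists, which rules out the classical behaviour, and for the $1/t$ variant no $\mathcal{L}^1$ realising curve exists, which rules out the weak behaviour. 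Meanwhile the curve does solve the interconnected system.
\end{enumerate}

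The main obstacle is designing this example so that the composition stays smooth while the witness genuinely blows up or jumps, since clean intersection must fail. I would first try the example's structure with $\mu$ chosen so that the $\lambda$ and $\mu$ factors cancel in the composition but not in the witness.
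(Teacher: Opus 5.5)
Your inclusion argument is sound, and for $\mathfrak{t}=\mathrm{w}$ it is more careful than the paper, which simply calls the inclusion clear from the definitions. Your lemma is what justifies the weak case: along a continuous shadow, weak membership of an $\mathcal{L}^1_c$ curve in a Dirac structure (resp.\ Lagrangian subbundle) is equivalent to almost-everywhere membership (continuous frame of $x^*\mathcal{D}$, scalar bumps, $\mathcal{D}=\mathcal{D}^\bbot$). It lets you pass from the two weak behaviours and the pointwise constraint in $\mathcal{D}_{\mathrm{int}}$ to weak membership in the smooth composition.

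The gap is in the second half. ``Not necessarily identity'' can only be established by an explicit counterexample, and you do not give one. You describe the mechanism (a witness forced to behave like $1/\lambda$ along a curve crossing a zero of $\lambda$) but leave the construction open as the ``main obstacle''. Your proposed search direction is also off. The $\mu$-variant of Example~\ref{ex:sufficient_not_necessary}, with $\mu$ vanishing where $\lambda$ does not, is precisely the case in which the composition is \emph{not} smooth, so it is not an admissible interconnection constraint. No tuning is needed: the first half of that example, with $\lambda^2$ in $\mathcal{D}_2$, already has smooth composition $\mathbb{R}\times(\mathbb{R}\oplus\set{0})$, while clean intersection fails at the zeros of $\lambda$.

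Concretely, the paper takes the system $(\dot x,-y_1,-y_2,\mathrm{d}H_x,u_1,u_2)\in\mathcal{D}_1$ on $M=\mathbb{R}$ (the graph of $J_\lambda(x)$, no resistive port) with constraint $(-y_1,-y_2,u_1,u_2)\in\mathcal{D}_2$. It uses two copies and $\mathcal{D}_2\times\mathcal{D}_2$ to fit the two-system definition. The interconnected system then reads $\mathrm{d}H_x=0$ with $\dot x$ unconstrained, so your ``$\dot x=1$'' is only a particular solution, not the system.

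This exposes a second missing ingredient: $H$ must be constant, at least near the zero of $\lambda$. For instance, with $H(x)=x$ both behaviours are empty. With $H$ constant, every $\mathcal{C}^1$ (resp.\ $\mathcal{C}\cap W^{1,1}$) curve lies in the interconnected behaviour. Along $x(t)=x^0+t$, with $x^0$ an isolated zero of $\lambda$, the relations $u_1=-\lambda(x)\dot x=-\lambda(x)^2y_2$ and $u_2=-\lambda(x)^2\dot x=\lambda(x)^2y_1$ force $|y_1|=1$ and $|y_2(t)|=1/|\lambda(x^0+t)|$ for $t\neq0$.

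Since $|\lambda(x^0+t)|\leq C|t|$, this witness is unbounded and non-integrable near $t=0$. Hence no continuous witness exists. Applying your a.e.\ lemma to any weak witness shows no $\mathcal{L}^1$ witness exists either. So your guess $\lambda(x)=x$ already works for both $\mathfrak{t}=\mathrm{c}$ and $\mathfrak{t}=\mathrm{w}$, and the $\mathrm{sgn}$-type variant is unnecessary.
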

\begin{proof}
The inclusion is clear from definition; it remains to find an example of port-Hamiltonian systems, where it is not the identity. In Example~\ref{ex:sufficient_not_necessary}, we constructed the Dirac structures
\begin{align*}
\mathcal{D}_{1} & = \set{(t,X,J_{\lambda}(t)X)\,\big\vert\,X\in\mathbb{R}^3,t\in\mathbb{R}},\\
\mathcal{D}_{2} & = \set{\left.\left(t,\begin{pmatrix}
x\\y
\end{pmatrix},\begin{pmatrix}
\lambda(t)^2y\\-\lambda(t)^2x
\end{pmatrix}\right)\,\right\vert\,t,x,y\in\mathbb{R}}.
\end{align*}
whose composition is the smooth Dirac structure $\mathbb{R}\times (\mathbb{R}\oplus\set{0})$. In particular, the composition
\begin{align*}
\mathcal{D}_1\vert{\mathcal{D}_2\times\mathcal{D}_2}\mathcal{D}_1 = \big(\mathcal{D}_1\circ\mathcal{D}_2\big)\times\big(\mathcal{D}_1\circ\mathcal{D}_2\big)
\end{align*}
is smooth. Let $H\in\mathcal{C}^\infty(\mathbb{R})$ be a constant function. The interconnection of the port-Hamiltonian systems
\begin{align*}
\big(\tfrac{\mathrm{d}}{\mathrm{d}t}x_j,-y_1^j,-y_2^j,\mathrm{d}H_{x_j},u_1^j,u_2^j\big)\in\mathcal{D}_1,\qquad j\in\set{1,2}
\end{align*}
with respect the interconnection constraint
\begin{align*}
\big(-y_1^j,-y_2^j,u_1^j,u_2^j\big)\in\mathcal{D}_2,\qquad j\in\set{1,2}
\end{align*}
is the port-Hamiltonian system
\begin{align*}
\big(\tfrac{\mathrm{d}}{\mathrm{d}t}x_1,\tfrac{\mathrm{d}}{\mathrm{d}t}x_2,\mathrm{d}H_{x_1},\mathrm{d}H_{x_2}\big)\in \mathbb{R}^2\oplus\set{0}.
\end{align*}
Therefore, we conclude
\begin{align*}
\mathfrak{B}^{\mathrm{c}}_{\mathcal{D}_1\Vert_{\mathcal{D}_2\times\mathcal{D}_2}\mathcal{D}_1,\set{0}\boxplus\set{0},H+H} & = \mathcal{C}^1(\cdot,\mathbb{R}^2),\\
\mathfrak{B}^{\mathrm{w}}_{\mathcal{D}_1\Vert_{\mathcal{D}_2\times\mathcal{D}_2}\mathcal{D}_1,\set{0}\boxplus\set{0},H+H} & = \big(\mathcal{C}\cap\mathcal{W}^{1,1}\big)(\cdot,\mathbb{R}^2).
\end{align*}
Let $t_0<t_1$ and $(a^1,a^2)\in\mathcal{C}^1([t_0,t_1],\mathbb{R}^2)$. $\big(-y_1^j,-y_2^j,u_1^j,u_2^j\big):[t_0,t_1]\to\mathcal{D}_2$ realises the interconnection constraint if, and only if,
\begin{align*}
u_1^j = \lambda(a)^2y_2^j & = -\lambda(a^j)\tfrac{\mathrm{d}}{\mathrm{d}t}a^j\\
-u_2^j = \lambda(a)^2y_1^j & = \lambda(a^j)^2\tfrac{\mathrm{d}}{\mathrm{d}t}a^j\\
0 & = \lambda(a^j)y_1^j+\lambda(a^j)^2y_2^j.
\end{align*}
Let $x^0\in\mathbb{R}$ be an isolated zero of $\lambda$ and consider $a^1 = a^2 = (t\mapsto x^0+t)$, where $t_0<0<t_1$ is so that $0$ is the only zero of $\lambda\circ a^j$. Therefore, we obtain
\begin{align*}
\begin{pmatrix}
y_1^j(t), y_2^j(t)
\end{pmatrix}\ \begin{cases} = \begin{pmatrix}
1, -\frac{1}{\lambda(x^0+t)}
\end{pmatrix}, & t \neq 0,\\
\in\mathbb{R}^2, & t = 0,
\end{cases}
\end{align*}
which is not continuous and may, e.g. for $\lambda(t)=(t-x_0)^2$, not be integrable.
\end{proof}

We have observed, that the behavioural interconnection is, in general, a proper subsheaf of the behaviour of the geometric interconnection. This raises the question, under which conditions the two behaviours coincide.

\section{Sufficient conditions}\label{sec:sufficient_conditions}

In this section, we shall investigate such sufficient conditions for the coincidence of the geometric and behavioural interconnection. First, we demonstrate that both interconnections coincide in the case of classical port-Hamiltonian ODEs without feedthrough.

\begin{example}\label{ex:classical_situation}
Let $U_i\subseteq\mathbb{R}^{n_i}$ be nonempty and open, let $J_1,R_i\in\mathcal{C}^\infty\big(U_i,\mathbb{R}^{n_i\times n_i}\big)$ with $J_i(\cdot) = -J_i(\cdot)^\top$ and $R_i(\cdot) = R_i(\cdot)^\top\geq 0$, and let $B_i\in\mathcal{C}^\infty\big(U_i,\mathbb{R}^{n_i\times m_i}\big)$, $i\in\set{1,2}$. Consider the port-Hamiltonian systems
\begin{equation}
\begin{aligned}
\tfrac{\mathrm{d}}{\mathrm{d}t}x_i & = \big(J_i(x_i)-R_i(x_i)\big)\nabla H_i(x_i)+B_i(x_i)u_i\\
y_i & = B_i(x_i)^\top\nabla H_i(x_i)
\end{aligned}\tag{$\mathrm{PH}_i$}
\end{equation}
with canonical geometric representation given by the Dirac structures $\mathcal{D}_i$ and non-negative Lagrangian subbundle $\mathcal{R}_i$. The canonical interconnection constraint
\begin{align}\label{eq:canonical_interconnection_constraint}
u_1 = A(x_1,x_2)y_2,\quad u_2 = -A(x_1,x_2)^\top y_1
\end{align}
with $A\in\mathcal{C}^{\infty}\big(\mathbb{R}^{n_1+n_2},\mathbb{R}^{m_1\times m_2}\big)$ is represented by the Dirac structure
\begin{align*}
\mathcal{D}_{\mathrm{int}} := \set{(x_1,x_2,y_1,y_2,A(x_1,x_2)y_2,-A(x_1,x_2)^\top y_1,u_1,u_2)\,\big\vert\,t\in\mathbb{R}, y_i\in\mathbb{R}^{m_i}}.
\end{align*}
The behavioural interconnection is then the behaviour (classical or weak) of the dynamical system
\begin{equation}\label{eq:example_interconnection_system}
\begin{aligned}
\tfrac{\mathrm{d}}{\mathrm{d}t}x_1 & = \big(J_1(x_1)-R_1(x_1)\big)\nabla H_1(x_1)+B_1(x_1)u_1\\
\tfrac{\mathrm{d}}{\mathrm{d}t}x_2 & = \big(J_2(x_2)-R_2(x_2)\big)\nabla H_2(x_2)+B_2(x_2)u_2\\
y_1 & = B_1(x_1)^\top\nabla H_1(x_1)\\
y_2 & = B_2(x_2)^\top\nabla H_2(x_2)\\
u_1 & = A(x_1,x_2)y_2,\\
u_2 & = -A(x_1,x_2)^\top y_1
\end{aligned}
\end{equation}
and the behaviour of the geometric interconnection is the behaviour of the dynamical system
\begin{small}
\begin{equation}
\begin{aligned}
\tfrac{\mathrm{d}}{\mathrm{d}t}x_1 & = \big(J_1(x_1)-R_1(x_1)\big)\nabla H_1(x_1) +  B_1(x_1)A(x_1,x_2)B_2(x_2)^\top\nabla H_2(x_2)\\
\tfrac{\mathrm{d}}{\mathrm{d}t}x_2 & = \big(J_2(x_2)-R_2(x_2)\big)\nabla H_2(x_2) - B_2(x_2)A(x_1,x_2)^\top B_1(x_1)^\top\nabla H_1(x_1)
\end{aligned}
\end{equation}
\end{small}
Let $t_0<t_1\in\mathbb{R}$, and let $(x_1,x_2)\in\mathfrak{B}_{\mathfrak{t},\mathcal{D}_1\Vert_{\mathcal{D}_{\mathrm{int}}}\mathcal{D}_2,\mathfrak{R}_1\times\mathfrak{R}_2}([t_0,t_1])$ for $\mathfrak{t}\in\set{c,w}$. In particular, $x_i\in\mathcal{C}([t_0,t_1],U_i)$ and hence smoothness of $A(\cdot,\cdot)$, $B_i(\cdot)$ and $H_i(\cdot)$ implies 
\begin{align*}
y_i(\cdot) & := B_i(x_i(\cdot))^\top\nabla H_i(x_i(\cdot))\in\mathcal{C}([t_0,t_1],\mathbb{R}^{m_i})\subseteq L^1([t_0,t_1],\mathbb{R}^{m_i}),\\
u_1(\cdot) & := A(x_1(\cdot),x_2(\cdot))y_2(\cdot)\in\mathcal{C}([t_0,t_1],\mathbb{R}^{m_1})\subseteq L^1([t_0,t_1],\mathbb{R}^{m_1}),\\
u_2(\cdot) & := -A(x_1(\cdot),x_2(\cdot))^\top y_1(\cdot)\in\mathcal{C}([t_0,t_1],\mathbb{R}^{m_2})\subseteq L^1([t_0,t_1],\mathbb{R}^{m_2}).
\end{align*}
By plugging in, it is observed that $(x_1,x_2,y_1,y_2,u_1,u_2)$ satisfies~\eqref{eq:example_interconnection_system}, and therefore $(x_1,x_2)\in\big(\mathfrak{B}^{\mathfrak{t}}_{\mathcal{D}_1,\mathcal{R}_1,H_1}\Vert_{\mathcal{D}_{\mathrm{int}}}\mathfrak{B}^{\mathfrak{t}}_{\mathcal{D}_2,\mathcal{R}_2,H_2}\big)([t_0,t_1])$. We conclude that
\begin{align*}
\mathfrak{B}^{\mathfrak{t}}_{\mathcal{D}_1,\mathcal{R}_1,H_1}\Vert_{\mathcal{D}_{\mathrm{int}}}\mathfrak{B}^{\mathfrak{t}}_{\mathcal{D}_2,\mathcal{R}_2,H_2} = \mathfrak{B}_{\mathfrak{t},\mathcal{D}_1\Vert_{\mathcal{D}_{\mathrm{int}}}\mathcal{D}_2,\mathfrak{R}_1\times\mathfrak{R}_2}.
\end{align*}
\end{example}

A second class of port-Hamiltonian systems, which is widely used, considers constant Dirac structures. In that case, we show that both interconnections coincide due to the following result.

\begin{lemma}\label{lem:constant_case}
Let $D_1\leq \mathbb{R}^{n+m}\oplus\mathbb{R}^{n+m}$ and $D_2\leq\mathbb{R}^m\oplus\mathbb{R}^m$ be constant Dirac structures. For all $t_0<t_1\in\mathbb{R}$ and $k\in\mathbb{N}\cup\set{\infty}$, $\mathcal{C}^k([t_0,t_1],\mathcal{D}_1\circ\mathcal{D}_2)$ and
\begin{align*}
\set{(X,\alpha)\in\mathcal{C}^k([a,b],\mathbb{R}^n\oplus\mathbb{R}^n)\,\left\vert
\begin{array}{l}
\exists (Y,\beta)\in\mathcal{C}^k([t_0,t_1],D_2):\\
\hspace*{1.5cm}(X,Y,\alpha,\beta)\in\mathcal{C}^k([t_0,t_1],D_1)
\end{array}
\right.}
\end{align*}
coincide, and $W^{k,p}([t_0,t_1],\mathcal{D}_1\circ\mathcal{D}_2)$ and
\begin{align*}
\set{(X,\alpha)\in W^{k,p}([a,b],\mathbb{R}^n\oplus\mathbb{R}^n)\,\left\vert
\!\begin{array}{l}
\exists (Y,\beta)\in W^{k,p}([t_0,t_1],D_2):\\
\hspace*{1cm}(X,Y,\alpha,\beta)\in W^{k,p}([t_0,t_1],D_1)
\end{array}
\right.\!}
\end{align*}
\end{lemma}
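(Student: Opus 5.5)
The key observation is that, since $D_1$ and $D_2$ are constant, the fibre $D_1\circ D_2$ is a fixed subspace of $\mathbb{R}^n\oplus\mathbb{R}^n$. Moreover, the map sending the ``witness'' $(Y,\beta)$ to $(X,\alpha)$ is linear and time-independent. Regularity can therefore be transported by composing with a constant linear map, which preserves both $\mathcal{C}^k$ and $W^{k,p}$. I would prove both equalities simultaneously, treating a function space $\mathcal{F}\in\set{\mathcal{C}^k,W^{k,p}}$ abstractly. The only properties used are that $\mathcal{F}([t_0,t_1],\mathbb{R}^N)$ is a vector space and that it is stable under composition with constant linear maps $\mathbb{R}^N\to\mathbb{R}^{N'}$. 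Here $\mathcal{F}([t_0,t_1],D)$ for a linear subspace $D$ means the $\mathcal{F}$-curves taking values in $D$.

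The inclusion ``$\supseteq$'' of the set of realisable pairs in $\mathcal{F}([t_0,t_1],D_1\circ D_2)$ is immediate. If $(X,Y,\alpha,\beta)$ is an $\mathcal{F}$-curve in $D_1$ with $(Y,\beta)$ in $D_2$, then pointwise $(X,\alpha)\in D_1\circ D_2$. Furthermore, $(X,\alpha)$ is obtained from $(X,Y,\alpha,\beta)$ by the constant coordinate projection, so it lies in $\mathcal{F}$.

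For the converse, consider the linear subspace
\begin{align*}
K := \set{(X,Y,\alpha,\beta)\in D_1\,\big\vert\,(Y,\beta)\in D_2}\leq\mathbb{R}^{n+m}\oplus\mathbb{R}^{n+m},
\end{align*}
and the linear projection $P:K\to\mathbb{R}^n\oplus\mathbb{R}^n$, $(X,Y,\alpha,\beta)\mapsto(X,\alpha)$. By definition, $\ran P = D_1\circ D_2$. Choose a complement $C$ of $\ker P$ in $K$. Then $P\vert_C:C\to D_1\circ D_2$ is a linear isomorphism, and $S := (P\vert_C)^{-1}:D_1\circ D_2\to K$ is a linear right inverse of $P$. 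Extend $S$ linearly, for instance by zero on a complement of $D_1\circ D_2$, to a constant linear map $\overline{S}:\mathbb{R}^n\oplus\mathbb{R}^n\to\mathbb{R}^{n+m}\oplus\mathbb{R}^{n+m}$. Now, given $(X,\alpha)\in\mathcal{F}([t_0,t_1],D_1\circ D_2)$, set $(\widetilde{X},Y,\widetilde{\alpha},\beta):=\overline{S}(X,\alpha)$. This curve lies in $\mathcal{F}$ because $\overline{S}$ is constant and linear, it takes values in $K\subseteq D_1$, it satisfies $(Y,\beta)\in D_2$ pointwise, and it has $\widetilde{X}=X$, $\widetilde{\alpha}=\alpha$ because $P\circ S=\mathrm{id}$. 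In the Sobolev case, ``pointwise'' is understood almost everywhere. Alternatively, note that $\overline{S}(X,\alpha)$ lands in the closed subspace $K$ wherever $(X,\alpha)$ lands in $D_1\circ D_2$. The projected curve $(Y,\beta)$ is in $\mathcal{F}([t_0,t_1],D_2)$ by the same argument, which gives the required witness.

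I do not expect a genuine obstacle. The only points needing care are that membership of a $W^{k,p}$-curve in a subspace is an a.e. condition that is preserved by linear maps, and that weak derivatives commute with constant linear maps. The latter follows directly from the definition of the weak derivative by testing componentwise. The proof makes clear why non-constancy matters: in the general case the right inverse $S$ would have to depend on the base point, and it can blow up where $\ker P$ jumps in rank. This is exactly the phenomenon exhibited in Proposition~\ref{prop:mono_but_not_epi}.
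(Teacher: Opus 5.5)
Your proof is correct and follows essentially the same idea as the paper: lift $(X,\alpha)$ through a constant linear right inverse of the projection $K\to D_1\circ D_2$, which preserves $\mathcal{C}^k$ and $W^{k,p}$ regularity. The paper builds this lift explicitly from a kernel representation of $D_1$, an image representation of $D_2$ and a left inverse $N$ of $M=E_2F_3^1+F_2E_3^1$, setting $(Y,\beta)=\big(F_3^1N(E_1X+F_1\alpha),E_3^1N(E_1X+F_1\alpha)\big)$, whereas you obtain it coordinate-free as $(P\vert_C)^{-1}$ on a complement $C$ of $\ker P$ in $K$.
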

\begin{proof}
Consider a kernel representation
\begin{align*}
D_1 = \ker\begin{bmatrix}
E_1 & -E_2 & F_1 & -F_2
\end{bmatrix}
\end{align*}
with $E_1,F_1\in\mathbb{R}^{(n+m)\times n}$ and $E_2,F_2\in\in\mathbb{R}^{(n+m)\times m}$, and consider an image representation
\begin{align*}
D_2 = \set{(F_3\lambda,E_3\lambda)\,\big\vert\,\lambda\in\mathbb{R}^m}
\end{align*}
with $F_3,E_3\in\mathbb{R}^{m\times m}$. Then, we have
\begin{align*}
D_1\circ D_2 = \set{(X,\alpha)\in\mathbb{R}^n\oplus\mathbb{R}^n\,\big\vert\,\exists\lambda\in\mathbb{R}^m: E_1X+F_1\alpha = (E_2F_3+F_2E_3)\lambda}.
\end{align*}
Split $\mathbb{R}^m\simeq\mathbb{R}^\ell\oplus\ker(E_2F_3+F_2E_3)$ and decompose accordingly $F_3 = [F_3^1,F_3^2]$ and $E_3 = [E_3^1,E_3^2]$. In particular, $M := E_2F_3^1+F_2E_3^1\in\mathbb{R}^{(n+m)\times \ell}$ has full column rank and thus admits a left-inverse $N\in\mathbb{R}^{\ell\times (m+n)}$. Let $t_0<t_1\in\mathbb{R}$, $k\in\mathbb{N}$ and let $(X,\alpha)\in\mathcal{C}^k([t_0,t_1], D_1\circ D_2)$. Then,
\begin{align*}
\begin{bmatrix}
F_3^1NE_1 & F_3^1NF_1\\
E_3^1NE_1 & E_3^1NF_1
\end{bmatrix}\begin{pmatrix}
X\\\alpha
\end{pmatrix}\in\mathcal{C}^k([t_0,t_1], D_2)
\end{align*}
satisfies with $E_1X(t)+F_1\alpha(t)\in\mathrm{ran}\,M$
\begin{align*}
\begin{bmatrix}
E_2 & F_2
\end{bmatrix}\begin{bmatrix}
F_3^1NE_1 & F_3^1NF_1\\
E_3^1NE_1 & E_3^1NF_1
\end{bmatrix}\begin{pmatrix}
X(t)\\\alpha(t)
\end{pmatrix} & = MN(E_1X(t) + F_1\alpha(t))\\
& = (E_1X(t) + F_1\alpha(t))
\end{align*}
and hence $\big(X,F_3^1N(E_1X+F_1\alpha),\alpha,E_3^1N(E_1X+F_1\alpha)\big)\in \mathcal{C}^k([t_0,t_1], D_2)$. Conversely, if $(X,\alpha)\in\mathcal{C}^k([a,b],\mathbb{R}^n\oplus\mathbb{R}^n)$ with $(X,Y,\alpha,\beta)\in\mathcal{C}^k([t_0,t_1],D_1)$ for some $(Y,\beta)\in\mathcal{C}^k([t_0,t_1],D_2)$, then clearly $(X,\alpha)\in\mathcal{C}^k([t_0,t_1], D_1\circ D_2)$. This verifies the first identity. Since matrices map vector-valued Sobolev spaces $W^{k,p}$, $p\in[0,\infty]$ into vector-valued Sobolev spaces $W^{k,p}$, the second identity holds true.
\end{proof}

\begin{proposition}\label{prop:constant_case}
Let $U_j\subseteq\mathbb{R}^{n_j}$ be open and non-empty. Consider further constant Dirac structures $\mathcal{D}_j\leq U_j\times \big(\mathbb{R}^{n_j+m_j^{\mathrm{r}}+m_j^{\mathrm{ep}}+m_j^{\mathrm{ip}}}\oplus\mathbb{R}^{n_j+m_j^{\mathrm{r}}+m_j^{\mathrm{ep}}+m_j^{\mathrm{ip}}}\big)$, Lagrangian subbundles $\mathcal{R}_j\leq U_j\times\big(\mathbb{R}^{m_j^\mathrm{r}}\oplus\mathbb{R}^{m_j^\mathrm{r}}\big)$ and Hamiltonian functions $H_j\in\mathcal{C}^\infty(U_j)$. Let finally $\mathcal{D}_{\mathrm{int}}\leq (U_1\times U_2)\times\big(\mathbb{R}^{m_1^{\mathrm{ip}}+m_2^{\mathrm{ip}}}\oplus\mathbb{R}^{m_1^{\mathrm{ip}}+m_2^{\mathrm{ip}}}\big)$ be a constant Dirac structure. Then,
\begin{align*}
\mathfrak{B}^{\mathfrak{t}}_{\mathcal{D}_1,\mathcal{R}_1,H_1}\Vert_{\mathcal{D}_{\mathrm{int}}}\mathfrak{B}^{\mathfrak{t}}_{\mathcal{D}_2,\mathcal{R}_2,H_2} = \mathfrak{B}^{\mathfrak{t}}_{\mathcal{D}_1\Vert_{\mathcal{D}_{\mathrm{int}}}\mathcal{D}_2,\mathcal{R}_1\times\mathcal{R}_2,H_1+H_2},
\end{align*}
\end{proposition}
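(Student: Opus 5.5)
The inclusion ``$\subseteq$'' is Proposition~\ref{prop:mono_but_not_epi}. It therefore suffices to prove the reverse inclusion: every element of the behaviour of the geometric interconnection can be lifted to port-variables with the required regularity. The plan is to reduce the problem to Lemma~\ref{lem:constant_case} after reordering the variables. Write $D_1 := (\mathcal{D}_1)_{\cdot}\boxplus(\mathcal{D}_2)_{\cdot}$, viewed as a constant Dirac structure in $\mathbb{R}^{N+m}\oplus\mathbb{R}^{N+m}$. Here the first block collects the ``internal'' variables
\begin{align*}
\big(\tfrac{\mathrm{d}}{\mathrm{d}t}x_1,\tfrac{\mathrm{d}}{\mathrm{d}t}x_2,f_1,f_2,-y_1^1,-y_2^1;\ \nabla H_1,\nabla H_2,e_1,e_2,u_1^1,u_2^1\big),
\end{align*}
and the second block collects the interconnection port-variables $(-y_1^2,-y_2^2,u_1^2,u_2^2)$, with $m=m_1^{\mathrm{ip}}+m_2^{\mathrm{ip}}$. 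Take $D_2 := \mathcal{D}_{\mathrm{int}}$, which is constant. By construction, the fibres of $\mathcal{D}_1\Vert_{\mathcal{D}_{\mathrm{int}}}\mathcal{D}_2$ are exactly $D_1\circ D_2$. Since these are constant, the composition is a constant, hence smooth, Dirac structure, so the interconnection constraint is admissible.

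Fix $t_0<t_1$ and an element of $\mathfrak{B}^{\mathfrak{t}}_{\mathcal{D}_1\Vert_{\mathcal{D}_{\mathrm{int}}}\mathcal{D}_2,\mathcal{R}_1\times\mathcal{R}_2,H_1+H_2}([t_0,t_1])$, with state $(x_1,x_2)$. Collect the internal variables into a curve $(X,\alpha)$. In the classical case, $x_j\in\mathcal{C}^1$ and the port-variables are continuous, so $(X,\alpha)\in\mathcal{C}^0([t_0,t_1],D_1\circ D_2)$. In the weak case, the entries are in $L^1$ and $\nabla H_j(x_j)$ is continuous. Apply the construction in the proof of Lemma~\ref{lem:constant_case} with $k=0$, or with the $L^1$ version. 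That construction produces the explicit lift
\begin{align*}
(Y,\beta) := \big(F_3^1N(E_1X+F_1\alpha),\,E_3^1N(E_1X+F_1\alpha)\big),
\end{align*}
a \emph{constant} linear map applied to $(X,\alpha)$. Consequently $(Y,\beta)$ inherits continuity, respectively integrability, pointwise, and $(X,Y,\alpha,\beta)\in D_1$ holds wherever $(X,\alpha)\in D_1\circ D_2$ holds. Setting $(-y_1^2,-y_2^2,u_1^2,u_2^2):=(Y,\beta)$ then gives curves with values in $\mathcal{D}_{\mathrm{int}}$. Together with the unchanged state and resistive variables, these satisfy the defining relations of $\mathfrak{B}^{\mathfrak{t}}_{\mathcal{D}_j,\mathcal{R}_j,H_j}([t_0,t_1])$ for $j=1,2$. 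The state regularity is untouched, and the resistive relation $(f_j,e_j)\in\mathcal{R}_j$ is inherited from $\mathcal{R}_1\times\mathcal{R}_2$ componentwise.

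The main obstacle is the weak case, where membership in a Dirac structure is defined by testing against continuous compactly supported sections of the pulled-back bundle rather than pointwise. For constant subspaces, weak membership in $D$ should be equivalent to membership a.e. To see this, test against $\varphi(t)d$ for $d\in D$ and scalar test functions $\varphi$. This shows that the pairing of $(f,e)$ with every $d\in D$ vanishes a.e., hence $(f,e)\in D^{\bbot}=D$ a.e., and the converse is immediate. The same argument applies to Lagrangian subbundles with $\Omega$ in place of $\llangle\cdot,\cdot\rrangle$.

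With this equivalence, the weak statement reduces to the a.e. pointwise statement. The linear lift preserves a.e. inclusion and $L^1$, so it completes the weak case exactly as in the classical case. Subtleties about which port-variables must be regular, namely only $\mathcal{L}^1_c$ with continuous shadow, are harmless: the shadow is the common state $(x_1,x_2)$, which is unchanged.
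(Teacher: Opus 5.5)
Your proposal is correct and follows the paper's proof. Both reduce to Lemma~\ref{lem:constant_case} via $\mathcal{D}_1\Vert_{\mathcal{D}_{\mathrm{int}}}\mathcal{D}_2 = (\mathcal{D}_1\times\mathcal{D}_2)\circ\mathcal{D}_{\mathrm{int}}$ and lift the port-variables with the constant linear maps $F_3^1N$, $E_3^1N$ built there. Your extra observation that weak membership in a constant Dirac structure is equivalent to a.e.\ membership (test with $\varphi d$ and use $D^{\bbot}=D$) usefully makes the weak case explicit, which the paper's proof only writes out for $\mathfrak{t}=\mathrm{c}$.
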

\begin{proof}
Let $(f_1,f_2,y_1,y_2,e_1,e_2,u_1,u_2)\in\mathfrak{B}_{\mathrm{c},\mathcal{D}_1\Vert_{\mathcal{D}_{\mathrm{int}}}\mathcal{D}_2,\mathcal{R}_1\times\mathcal{R}_2,H_1+H_2}\big([t_0,t_1]\big)$ for $t_0<t_1\in\mathbb{R}$. By definition, this is equivalent to
\begin{align*}
\left(-\dot{x}_1,-\dot{x}_2,f_1,f_2,y_1,y_2,\mathrm{d}H_1(x_1),\mathrm{d}H_2(x_2),e_1,e_2,u_1,u_2\right)\in\mathcal{C}\big([t_1,t_1],\mathcal{D}_1\Vert_{\mathcal{D}_{\mathrm{int}}}\mathcal{D}_2\big)
\end{align*}
and $(f_j,e_j)\in\mathcal{C}([t_0,t_1],\mathcal{R}_j)$, where $x_j := p_{U_j}y_j$ and $\dot{x}_j := \tfrac{\mathrm{d}}{\mathrm{d}t}x_j$. With
\begin{align*}
\mathcal{D}_1\Vert_{\mathcal{D}_{\mathrm{int}}}\mathcal{D}_2 = (\mathcal{D}_1\times\mathcal{D}_2)\circ\mathcal{D}_{\mathrm{int}},
\end{align*}
Lemma~\ref{lem:constant_case} implies that there exists $(\overline{y}_1,\overline{y}_2,\overline{u}_1,\overline{u}_2)\in\mathcal{C}\big([t_0,t_1],\mathcal{D}_{\mathrm{int}}\big)$ so that
\begin{align*}
\left(-\dot{x}_j,f_j,y_j,\overline{y}_j,\mathrm{d}H_j(x_j),e_j,u_j,\overline{u}_j\right)\in\mathcal{C}\big([t_0,t_1],\mathcal{D}_{j}\big),\qquad j\in\set{1,2}.
\end{align*}
Therefore, $(f_1,f_2,y_1,y_2,e_1,e_2,u_1,u_2)\in\left(\mathfrak{B}_{\mathfrak{t},\mathcal{D}_1,\mathcal{R}_1,H_1}\Vert_{\mathcal{D}_{\mathrm{int}}}\mathfrak{B}_{\mathfrak{t},\mathcal{D}_2,\mathcal{R}_2,H_2}\right)([t_0,t_1])$ and since the reverse identity holds by construction, the claimed identity is indeed true.
\end{proof}

In the proof of Lemma~\ref{lem:constant_case}, we have utilised that the clean intersection condition is, in the constant case, trivially satisfied. Indeed, we may observe that the clean intersection condition is sufficient in the non-constant case.

\begin{lemma}\label{lem:non_constant_case}
Let $\pi:E\to M$ and $F\to M$ be vector bundles over the same manifold, and let $\mathcal{D}_1\leq (E\oplus F)\oplus(E^*\oplus F^*)$ and $\mathcal{D}_2\leq F\oplus F^*$ be Dirac structures. If $\mathcal{D}_1\cap\mathcal{D}_2$ defined in~\eqref{eq:intersection} has constant rank, then $\mathcal{C}^k([t_0,t_1],\mathcal{D}_1\circ\mathcal{D}_2)$ and
\begin{align*}
\set{(X,\alpha)\in\mathcal{C}^k([a,b],E\oplus E^*)\,\left\vert
\begin{array}{l}
\exists (Y,\beta)\in\mathcal{C}^k([t_0,t_1],D_2):\\
\hspace*{1.5cm}(X,Y,\alpha,\beta)\in\mathcal{C}^k([t_0,t_1],D_1)
\end{array}
\right.}
\end{align*}
coincide for all $t_0<t_1\in\mathbb{R}$ and $k\in\mathbb{N}$; likewise, $\mathcal{L}^p_c([t_0,t_1],,\mathcal{D}_1\circ\mathcal{D}_2)$ and
\begin{align*}
\set{(X,\alpha)\in\mathcal{L}^p_c([a,b],E\oplus E^*)\,\left\vert
\begin{array}{l}
\exists (Y,\beta)\in\mathcal{L}^p_c([t_0,t_1],D_2):\\
\hspace*{1.5cm}(X,Y,\alpha,\beta)\in\mathcal{L}^p_c([t_0,t_1],D_1)
\end{array}
\right.}
\end{align*}
coincide for all $p\in[1,\infty]$.
\end{lemma}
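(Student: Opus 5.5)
The inclusion of the lifted set into $\mathcal{C}^k([t_0,t_1],\mathcal{D}_1\circ\mathcal{D}_2)$, and likewise for $\mathcal{L}^p_c$, holds pointwise by the definition of the composition. The real content is the converse. Given a curve $(X,\alpha)$ with values in $\mathcal{D}_1\circ\mathcal{D}_2$ and shadow $x$, we must select $(Y,\beta)$ with values in $\mathcal{D}_2$ of the same regularity. The plan is to construct a \emph{smooth bundle map} $\sigma:\mathcal{D}_1\circ\mathcal{D}_2\to\mathcal{D}_2$ (fibrewise linear, covering $\mathrm{id}_M$) such that $(X,\sigma(X,\alpha),\alpha)$ lies in $\mathcal{D}_1$ for every $(X,\alpha)\in\mathcal{D}_1\circ\mathcal{D}_2$. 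We then set $(Y,\beta):=\sigma\circ(X,\alpha)$. Composition with a smooth bundle map preserves $\mathcal{C}^k$ regularity, since it preserves the extension-to-an-open-neighbourhood property. In the $\mathcal{L}^p_c$ case it preserves the continuous shadow, and a pointwise norm bound $|\sigma(\xi)|\le C\,|\xi|$ holds uniformly on the compact set $x([t_0,t_1])$. This yields both identities at once.

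To build $\sigma$, I use the objects in the proof of the clean-intersection proposition. Let $K:=\mathcal{D}_1\cap\mathcal{D}_2^\bbot=\set{(X,Y,\alpha,\beta)\in\mathcal{D}_1\mid (Y,\beta)\in\mathcal{D}_2}$ and $L:=\mathcal{D}_1\cap\mathcal{D}_2$. As shown there, the constant-rank hypothesis on $L$, together with the constant rank $\rk E$ of the fibrewise Dirac structure $\mathcal{D}_1\circ\mathcal{D}_2$, makes both $K$ and $L$ smooth subbundles. The projection $P:K\to E\oplus E^*$, $(X,Y,\alpha,\beta)\mapsto(X,\alpha)$, is then a smooth bundle morphism with image $\mathcal{D}_1\circ\mathcal{D}_2$ and kernel exactly $L$; the kernel is $L$ because $(0,Y,0,\beta)\in K$ if and only if it lies in $\mathcal{D}_1\cap\mathcal{D}_2$. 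Since $P$ has constant rank, I choose a smooth complement $K=L\oplus L'$, for instance the orthogonal complement with respect to an auxiliary bundle metric. Then $P\vert_{L'}:L'\to\mathcal{D}_1\circ\mathcal{D}_2$ is a smooth bundle isomorphism, and I set $\sigma:=\mathrm{pr}_{F\oplus F^*}\circ(P\vert_{L'})^{-1}$. By construction $(X,\sigma(X,\alpha),\alpha)\in K\subseteq\mathcal{D}_1$ and $\sigma(X,\alpha)\in\mathcal{D}_2$. This mirrors the left-inverse $N$ used in Lemma~\ref{lem:constant_case}, now performed globally and smoothly.

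The main obstacle is verifying that $K$ and $L$ are genuinely smooth subbundles and that $(P\vert_{L'})^{-1}$ is smooth, not merely fibrewise defined. For this I rely on the argument of the clean-intersection proposition. $\mathcal{D}_2^\bbot$, identified with $\set{(v,w,\varphi,\psi)\mid (w,\psi)\in\mathcal{D}_2}$, is a smooth subbundle, and an intersection of smooth subbundles of constant rank is a smooth subbundle~\cite[Theorem 10.34]{Lee13}. The rank of $K$ equals $\rk L+\rk E$ because the fibre dimension of the composition is fixed. A constant-rank smooth bundle morphism has smooth kernel and image, and its restriction to a smooth complement of the kernel is a smooth isomorphism onto the image. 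For the $\mathcal{L}^p_c$ statement, measurability of $\sigma\circ(X,\alpha)$ follows from continuity of $\sigma$. The operator-norm bound for $\sigma$ over the compact set $x([t_0,t_1])$ follows exactly as in Lemma~\ref{lem:independence_of_metric}, so membership in $\mathcal{L}^p_c$ is independent of the chosen metrics.
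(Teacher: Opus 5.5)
Your proof is correct, and it reaches the result by a different route from the paper's.

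\textbf{The paper's route.} The paper works along the given curve. It extends $(X,\alpha)$ to an open interval. It then uses contractibility of that interval to trivialise compatibly the pullbacks along the shadow $x$ of $\mathcal{D}_1$, $\mathcal{D}_2$ and $G:=p_{F\oplus F^*}(\mathcal{D}_1\cap\mathcal{D}_2)$. Next it writes $x^*\mathcal{D}_1$ in kernel form $A(t)(X,\alpha)=B(t)(Y,\beta)$ and observes that the $G$-coordinates span the kernel of the part of $B$ acting on $\mathcal{D}_2$. Finally it lifts via an explicit left inverse of the form $(B^\top B)^{-1}B^\top$ of the complementary full-column-rank block. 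This is a time-dependent version of the matrix $N$ in Lemma~\ref{lem:constant_case}.

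\textbf{Your route.} You construct, once and for all over $M$, the smooth fibre-linear map $\sigma=\mathrm{pr}_{F\oplus F^*}\circ(P\vert_{L'})^{-1}$ from the subbundles $K=\mathcal{D}_1\cap\mathcal{D}_2^\bbot$ and $L=\mathcal{D}_1\cap\mathcal{D}_2$. The underlying mechanism is the same: constant rank of $L$ permits a smooth splitting off of the kernel.

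\textbf{What your route buys.} The lift is independent of the curve, so the $\mathcal{C}^k$ and the $\mathcal{L}^p_c$ statements follow simultaneously, the latter via the operator-norm bound on the compact set $x([t_0,t_1])$. You never pull bundles back along a curve that is only $\mathcal{C}^k$ or merely continuous. So you need not track the regularity of the trivialisations and of $A,B$, which the paper asserts to be $\mathcal{C}^\infty$ and passes over with ``proven analogously'' in the $\mathcal{L}^p_c$ case. You also obtain smoothness of $\mathcal{D}_1\circ\mathcal{D}_2$, as the image of the constant-rank morphism $P$, as a by-product.

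\textbf{What the paper's route buys.} It gives explicit matrix formulas and uses only information along the curve.

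\textbf{One small caveat.} If $\mathcal{C}^k$ extensions are only required to take values in $E\oplus E^*$ rather than in $\mathcal{D}_1\circ\mathcal{D}_2$, first precompose $\sigma$ with a smooth fibrewise projection of $E\oplus E^*$ onto $\mathcal{D}_1\circ\mathcal{D}_2$.
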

\begin{proof}
Put $G:=p_{F\oplus F^*}\left(\mathcal{D}_1\cap\mathcal{D}_2\right)$, and let $(X,\alpha)\in \mathcal{C}^k([t_0,t_1],\mathcal{D}_1\circ\mathcal{D}_2)$ for some $t_0<t_1\in\mathbb{R}$ and $k\in\mathbb{N}$. There exists $\varepsilon\in (0,\infty)$ and representatives (or extensions) $(\overline{X},\overline{\alpha})\in\mathcal{C}^k((t_0-\varepsilon,t_1+\varepsilon),\mathcal{D}_1\circ\mathcal{D}_2)$. Put $x := \pi_E\circ \overline{X}$. Since $(t_0-\varepsilon,t_1+\varepsilon)$ is contractible, we find trivialisations
\begin{align*}
x^*(E\oplus E^*) & \simto (t_0-\varepsilon,t_1+\varepsilon)\times\mathbb{R}^{2n},\\
\varphi:x^*\mathcal{D}_1 & \simto (t_0-\varepsilon,t_1+\varepsilon)\times\mathbb{R}^{n+m},
\end{align*}
and trivialisations of $x^*G$, $x^*\mathcal{D}_2$ and $x^*(F\oplus F^*)$ so that the diagram
\begin{equation}\label{eq:comm_diagram}
\begin{tikzcd}
x^*G \arrow[d,"\subseteq"]\arrow[r,"\sim"] & (t_0-\varepsilon,t_1+\varepsilon)\times\mathbb{R}^\ell\arrow[d,"\subseteq"]\\
x^*\mathcal{D}_2\arrow[r,"\psi"]\arrow[d,"\subseteq"] & (t_0-\varepsilon,t_1+\varepsilon)\times\mathbb{R}^\ell\oplus\mathbb{R}^{m-\ell}\arrow[d,"\subseteq"]\\ 
x^*(F\oplus F^*)\arrow[r,"\sim"] & (t_0-\varepsilon,t_1+\varepsilon)\times\mathbb{R}^\ell\oplus\mathbb{R}^{m-\ell}\oplus\mathbb{R}^{m}\\ 
\end{tikzcd}
\end{equation}
commutes. In particular, we have a kernel representation
\begin{align*}
x^*\mathcal{D}_1 \simeq \set{(t,X,Y,\alpha,\beta)\in\mathbb{R}^{2(n+m)}\,\left\vert\, A(t)\begin{pmatrix}
X\\\alpha
\end{pmatrix} = B(t)\begin{pmatrix}
Y\\\beta
\end{pmatrix}\right.}
\end{align*}
for matrix fields
\begin{align*}
A & \in\mathcal{C}^\infty\big((t_0-\varepsilon,t_1+\varepsilon),\mathbb{R}^{(n+m)\times 2n}\big),\\
B = [B_1, B_2, B_3] & \in\mathcal{C}^\infty\big((t_0-\varepsilon,t_1+\varepsilon),\mathbb{R}^{(n+m)\times (\ell+(m-\ell)+m)}\big).
\end{align*}
Therefore, we have
\begin{align*}
x^*(\mathcal{D}_1\circ\mathcal{D}_2) \simeq \set{(t,X,\alpha)\in\mathbb{R}^{2n}\,\left\vert\,\begin{array}{l}\exists z_1\in\mathbb{R}^{\ell}~\exists z_2\in\mathbb{R}^{m-\ell}:\\
\hspace*{1cm} A(t)\begin{pmatrix}
X\\\alpha
\end{pmatrix} = \begin{bmatrix}B_1(t), B_2(t)\end{bmatrix}\begin{pmatrix}
z_1\\z_2
\end{pmatrix}
\end{array}\right.}
\end{align*}
and we observe that
\begin{align*}
& \ker \begin{bmatrix}B_1(t), B_2(t)\end{bmatrix}\\
&\quad \simeq \set{(Z,\beta)\in(\mathcal{D}_2)_{x(t)}\,\big\vert\,\forall (X,\alpha)\in(\mathcal{D}_1\circ\mathcal{D}_2)_{x(t)}:(X,Y,\alpha,\beta)\in\mathcal{D}_1} = G_{x(t)}
\end{align*}
for all $t\in(t_0-\varepsilon,t_1+\varepsilon)$ and thus the commutativity of~\eqref{eq:comm_diagram} yields $B_2\equiv 0$ and $\rk B_1\equiv \ell$. Hence,  $B_1$ admits the smooth left-inverse
\begin{align*}
M := (B_1(\cdot)^\top B_1(\cdot))^{-1}B_1(\cdot)^\top\in\mathcal{C}^\infty\big((t_0-\varepsilon,t_1+\varepsilon),\mathbb{R}^{\ell\times(n+m)}\big).
\end{align*}
Define
\begin{align*}
(\overline{Y},\overline{\beta}):(t_0-\varepsilon,t_1+\varepsilon) & \to F\oplus F^*,\\
t & \mapsto \psi^{-1}\left(M(t)A(t)\varphi\left(\overline{X}(t),\overline{\alpha}(t)\right)\right)
\end{align*}
which is in $\mathcal{C}^k\big((t_0-\varepsilon,t_1+\varepsilon),\mathcal{D}_2\big)$ and satisfies
\begin{align*}
\forall t\in (t_0-\varepsilon,t_1+\varepsilon): \left(\overline{X}(t),\overline{Y}(t),\overline{\alpha}(t),\overline{\beta}(t)\right)\in\mathcal{D}_1.
\end{align*}
Taking the restriction on $[t_0,t_1]$ gives the claimed identity. The second part of the lemma is proven analogously.
\end{proof}

Lemma~\ref{lem:non_constant_case} implies directly the sufficiency of the clean intersection condition for the coincidence of the geometric and behavioural interconnection.

\begin{proposition}
Let
\begin{itemize}
\item $E_s^j\to M_j$, $E_r^j\to M_j$, $E_{p_i}^j\to M_j$, $E_{p_e}^j\to M_j$ be smooth vector bundles,
\item $\mathcal{D}_j\leq \big(E_s^j\oplus E_r^j\oplus E_{p_i}^j\oplus E_{p_e}^j\big)\oplus\big((E_s^j)^*\oplus (E_r^j)^*\oplus (E_{p_i}^j)^*\oplus (E_{p_e}^j)^*\big)$ be Dirac structures,
\item $\mathcal{R}_j\leq E_r^j\oplus (E_r^j)^*$ be non-negative Lagrangian subbundles,
\item $H_j\in\mathcal{C}^\infty(M_j)$ be Hamiltonian functions,
\item $\mathcal{D}_{\mathrm{int}}\leq \big(E_{p_e}^1\oplus E_{p_e}^2\big) \big((E_{p_e}^1)^*\oplus (E_{p_e}^2)^*\big)$ be an {admissible interconnection constraint} for $(\mathcal{D}_1,\mathcal{D}_2)$.
\end{itemize}
If $(\mathcal{D}_1\times\mathcal{D}_2)\cap\mathcal{D}_{\mathrm{int}}$ has constant rank, then
\begin{align*}
\mathfrak{B}^{\mathfrak{t}}_{\mathcal{D}_1,\mathcal{R}_1,H_1}\Vert_{\mathcal{D}_{\mathrm{int}}}\mathfrak{B}^{\mathfrak{t}}_{\mathcal{D}_2,\mathcal{R}_2,H_2}\subseteq\mathfrak{B}^{\mathfrak{t}}_{\mathcal{D}_1\Vert_{\mathcal{D}_{\mathrm{int}}}\mathcal{D}_2,\mathcal{R}_1\boxplus\mathcal{R}_2,H_1+H_2}
\end{align*}
for $\mathfrak{t}\in\set{\mathrm{c},\mathrm{w}}$.
\end{proposition}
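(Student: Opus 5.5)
The plan is to verify the stated inclusion directly from the definitions. The constant-rank hypothesis on $(\mathcal{D}_1\times\mathcal{D}_2)\cap\mathcal{D}_{\mathrm{int}}$ is needed only in the weak case, to lift test sections; the classical case is the inclusion of Proposition~\ref{prop:mono_but_not_epi}.

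\textbf{Setup.} Fix $t_0<t_1$ and take an element $(f_1,f_2,y_1^1,y_2^1,e_1,e_2,u_1^1,u_2^1)$ of the behavioural interconnection on $[t_0,t_1]$. Let $(y_1^2,y_2^2,u_1^2,u_2^2)\colon[t_0,t_1]\to\mathcal{D}_{\mathrm{int}}$ be a realising curve, and let $x_j$ be the shadows. The regularity requirements on $x=(x_1,x_2)$ and on the remaining port variables are identical in both behaviours: $\mathcal{C}^1$ shadows with continuous ports in the classical case, $\mathcal{C}\cap W^{1,1}$ shadows with $\mathcal{L}^1_c$ ports in the weak case. The resistive constraint for $\mathcal{R}_1\boxplus\mathcal{R}_2$ splits componentwise, so it holds in whichever sense (pointwise or weak) it holds for each $\mathcal{R}_j$. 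The Hamiltonian $H_1+H_2$ has differential $(\mathrm{d}H_1,\mathrm{d}H_2)$. So only the Dirac constraint has to be checked.

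\textbf{Classical case.} For every $t$, the realising values $(y_1^2,y_2^2,u_1^2,u_2^2)(t)$ lie in $(\mathcal{D}_{\mathrm{int}})_{x(t)}$. Together with $(\tfrac{\mathrm{d}}{\mathrm{d}t}x_j,f_j,-y_j^1,-y_j^2,\mathrm{d}H_j,e_j,u_j^1,u_j^2)(t)\in(\mathcal{D}_j)_{x_j(t)}$, this is exactly the defining existence condition of $\mathcal{D}_1\Vert_{\mathcal{D}_{\mathrm{int}}}\mathcal{D}_2$ at $x(t)$. Hence the classical inclusion is immediate.

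\textbf{Weak case: lifting test sections.} Take a test section $(\varphi,\varepsilon)\in\mathcal{C}_c([t_0,t_1],x^*(\mathcal{D}_1\Vert_{\mathcal{D}_{\mathrm{int}}}\mathcal{D}_2))$. Write $\mathcal{D}_1\Vert_{\mathcal{D}_{\mathrm{int}}}\mathcal{D}_2=(\mathcal{D}_1\boxplus\mathcal{D}_2)\circ\mathcal{D}_{\mathrm{int}}$ and apply the continuous version of Lemma~\ref{lem:non_constant_case} (case $k=0$) under the constant-rank hypothesis. This gives a continuous port part $(\psi,\eta)$ in $x^*\mathcal{D}_{\mathrm{int}}$ such that $(\varphi,\psi,\varepsilon,\eta)$ is a section of $x^*(\mathcal{D}_1\boxplus\mathcal{D}_2)$. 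The lift in that proof is given by a linear, smooth bundle map $MA$ applied pointwise. Therefore it vanishes wherever $(\varphi,\varepsilon)$ vanishes, and the lifted section again has compact support in $(t_0,t_1)$. Split it into test sections of $x_1^*\mathcal{D}_1$ and $x_2^*\mathcal{D}_2$.

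\textbf{Weak case: the pairing identity.} Add the two weak identities for $\mathcal{D}_1$ and $\mathcal{D}_2$. The result is the desired weak identity for the composition, plus the additional term
\[
\int_{[t_0,t_1]}\llangle (y^2,u^2),(\psi,\eta)\rrangle\,\mathrm{d}t,
\]
up to the sign convention on $y^2$. This term vanishes pointwise, because both $(y^2,u^2)(t)$ and $(\psi,\eta)(t)$ lie in the isotropic fibre $(\mathcal{D}_{\mathrm{int}})_{x(t)}$. Consequently the curve satisfies the composed Dirac constraint in the weak sense, which proves the inclusion for $\mathfrak{t}=\mathrm{w}$.

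\textbf{Main obstacle.} The step that needs the most care is the lifting of test sections. It requires Lemma~\ref{lem:non_constant_case} for merely continuous sections along a shadow that is only continuous. In the proof of that lemma, the trivialisations of the pullback bundles, and the smooth left inverse of $B_1$, should therefore be taken over the topological pullback along $x$. I would first check that this argument goes through with continuous rather than smooth trivialisations. The smoothness of $M$ is not needed there, only its continuity, which the constant-rank hypothesis guarantees.
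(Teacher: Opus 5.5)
Your argument establishes the inclusion exactly as displayed, but that inclusion is not what this proposition is meant to assert, and it is not what the paper proves. The direction behavioural $\subseteq$ geometric is already Proposition~\ref{prop:mono_but_not_epi}, asserted there for both $\mathfrak{t}$ and without any rank hypothesis (you note this yourself for $\mathfrak{t}=\mathrm{c}$). Three things show that the displayed $\subseteq$ is a typo for $\supseteq$, i.e.\ for equality:
the sentence preceding the proposition (``coincidence of the geometric and behavioural interconnection''); the proof ``analogous to Proposition~\ref{prop:constant_case}'', whose substantive half is geometric $\subseteq$ behavioural; and the conclusion.

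For the literal direction, the constant-rank hypothesis is superfluous even when $\mathfrak{t}=\mathrm{w}$. Test with $\phi\, d_i$, where $\phi\in\mathcal{C}_c((t_0,t_1))$ is scalar and $(d_i)$ is a continuous frame of $x^*\mathcal{D}$. This shows that weak membership in a Dirac structure is the same as a.e.\ pointwise membership, because $\mathcal{D}^\bbot=\mathcal{D}$. Hence a.e.\ membership in $\mathcal{D}_1$ and $\mathcal{D}_2$, with port values in $\mathcal{D}_{\mathrm{int}}$, gives a.e.\ membership in the composition, and isotropy then gives the weak identity. Your lifting of test sections is correct, but it is not needed, and your claim that the hypothesis is required there is false.

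What is missing is the converse. Given $(f,y^1,e,u^1)\in\mathfrak{B}^{\mathfrak{t}}_{\mathcal{D}_1\Vert_{\mathcal{D}_{\mathrm{int}}}\mathcal{D}_2,\mathcal{R}_1\boxplus\mathcal{R}_2,H_1+H_2}([t_0,t_1])$, you must produce a realising port curve $(y^2,u^2)\colon[t_0,t_1]\to\mathcal{D}_{\mathrm{int}}$. It must be continuous for $\mathfrak{t}=\mathrm{c}$ and lie in $\mathcal{L}^1_c$ for $\mathfrak{t}=\mathrm{w}$. This is precisely the step that fails in the example of Proposition~\ref{prop:mono_but_not_epi}, and it is where constant rank enters.

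The paper applies Lemma~\ref{lem:non_constant_case}, with $\mathcal{D}_1\boxplus\mathcal{D}_2$ and $\mathcal{D}_{\mathrm{int}}$ in the roles of $\mathcal{D}_1$ and $\mathcal{D}_2$, to the trajectory $(\tfrac{\mathrm{d}}{\mathrm{d}t}x,f,-y^1,\mathrm{d}H_x,e,u^1)$ itself. Under constant rank, the pointwise witness is a continuous bundle map (your $MA$ composed with the trivialisations) applied to the curve. It therefore inherits continuity, or integrability, since the map is bounded over the compact interval. The shadows and the resistive constraint are unaffected.

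In the weak case you would proceed in three steps:
first pass from weak to a.e.\ membership in the composition via the frame argument above; then lift; finally observe that the lifted curve lies a.e.\ in $\mathcal{D}_j$, hence in $\mathcal{D}_j$ in the weak sense. The tools you set up (continuous trivialisations along a merely continuous shadow, and a continuous left inverse) are exactly the right ones. They have to be applied to the solution, not to the test sections.
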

\begin{proof}
This is analogous to the proof of Proposition~\ref{prop:constant_case}.
\end{proof}

\section{Conclusion}

In this note, we have studied a behavioural approach to interconnection of port-Hamiltonian systems. We have defined classical and weak behaviours for systems defined on general manifolds. The natural properties of solutions are encoded in the language of \textbf{Int}-sheaves. The well-known interconnection utilises the composition of Dirac structures, which is well-defined provided that a clean intersection condition is satisfied. An example of port-Hamiltonian systems so that the composition of the underlying Dirac structures is again a Dirac structure, although the clean intersection condition is not satisfied, was constructed, and it was shown that the interconnected system admits a solution so that the interconnection constraint can not be regularly fulfilled, where regularity means continuity or integrability. This shows that the classical interconnection is \textit{a priori} not dynamical, but geometrical. Finally, we proved that the failure of the clean intersection condition in our example is necessary for the emergence of the gap between the geometric and behavioural interconnection. This gives a checkable condition which guarantees that the interconnection of port-Hamiltonian systems is structure preserving in both the geometric and dynamical sense, and this condition is satisfied for the practical examples known to the author.

\section*{Declaration of generative AI and AI-assisted technologies in the manuscript preparation process}

During the preparation of this work, the author used Claude in order to construct the example in the proof of Proposition~\ref{prop:mono_but_not_epi} and to find the reference~\cite{EtayGoNiSant26} in Example~\ref{ex:fun_Dirac_structures}\,(iii). After using this tool/service, the author reviewed and edited the content as needed and takes full responsibility for the content of the published article.

\section*{Acknowledgements}

The author thanks Prof.~Thomas Berger (MLU Halle-Wittenberg) for catching a mistake in Example~\ref{ex:sufficient_not_necessary}.

\bibliographystyle{plain}
\bibliography{../Literatur}

\begin{thebibliography}{10}

\bibitem{Interconnection18}
M~Barbero-Li{\~{n}}{\'{a}}n, H~Cendra, E~Garc{\'{\i}}a-Tora{\~{n}}o
  Andr{\'{e}}s, and D~Mart{\'{\i}}n de~Diego.
\newblock New insights in the geometry and interconnection of
  port-{H}amiltonian systems.
\newblock {\em Journal of Physics A: Mathematical and Theoretical},
  51(37):375201, aug 2018.

\bibitem{Bock26}
Alexander~Samuel Bock.
\newblock On local solutions to time-varying linear daes, 2026.

\bibitem{Bred97}
Glen~E. Bredon.
\newblock {\em Sheaf Theory}, volume 170 of {\em Graduate Texts in
  Mathematics}.
\newblock Springer-Verlag, Berlin, Heidelberg, New York, 2nd edition, 1997.

\bibitem{Burs13}
Henrique Bursztyn.
\newblock A brief introduction to {D}irac manifolds.
\newblock In Alexander Cardona, Iván Contreras, and Andrés~F. Reyes-Lega,
  editors, {\em Geometric and topological methods for quantum field theory},
  pages 4--38. Cambridge University Press, 2013.

\bibitem{Interconnection07}
J.\ Cervera, A.J.\ van~der Schaft, and A.\ Banos.
\newblock Interconnection of port-{H}amiltonian systems and composition of
  {D}irac structures.
\newblock {\em automatica}, 43:212 -- 225, 2007.

\bibitem{ConvSchaf19}
Alexandra Convent and Jean~Van Schaftingen.
\newblock Higher order intrinsic weak differentiability and {S}obolev spaces
  between manifolds.
\newblock {\em Advances in Calculus of Variations}, 12(3):303--332, 2019.

\bibitem{ConvScha16}
Alexandra Convent and Jean van Schaftingen.
\newblock Intrinsic colocal weak derivatives and sobolev spaces between
  manifolds.
\newblock {\em Annali della Scuola Normale Superiore di Pisa - Classe di
  Scienze}, XVI:97--128, 2016.

\bibitem{CourWein88}
Ted Courant and Alan Weinstein.
\newblock Beyond {P}oisson structures.
\newblock In {\em Actions hamiltoniennes de groupes. Troisième théorème de
  Lie (Lyon, 1986)}, pages 39--49, Paris, 1988. Hermann.

\bibitem{Cour90}
Theodore~James Courant.
\newblock Dirac manifolds.
\newblock {\em Transactions of the American Mathematical Society},
  319(2):631--661, 1990.

\bibitem{DalsvdS98}
Morten Dalsmo and Arjan {van der Schaft}.
\newblock On representations and integrability of mathematical structures in
  energy-conserving physical systems.
\newblock {\em SIAM Journal of Control and Optimization}, 37(1):54--91, 1998.

\bibitem{Dorf93}
Irene Dorfman.
\newblock {\em Dirac {S}tructures and {I}ntegrability of {N}onlinear
  {E}volution {E}quations}.
\newblock John Wiley and Sons, New York, Chichester, Brisbane, Toronto,
  Singapore, 1993.

\bibitem{Dorf87}
Irene~Ya. Dorfman.
\newblock Dirac structures of integrable evolution equations.
\newblock {\em Physics Letters A}, 125(5):240--246, 1987.

\bibitem{EtayGoNiSant26}
Fernando Etayo, Pablo Gómez-Nicolás, and Rafael Santamaría.
\newblock On the triviality of the generalized tangent bundle, 2026.

\bibitem{Hajl09}
Piotr Hajlasz.
\newblock {\em Sobolev Mappings between Manifolds and Metric Spaces}, pages
  185--222.
\newblock Springer New York, New York, NY, 2009.

\bibitem{SchaftJeltsema14}
Dimitri Jeltsema and Arjan van~der Schaft.
\newblock Port-{H}amiltonian systems theory: An introductory overview.
\newblock In {\em Foundations and Trends in Systems and Control}, volume~1,
  pages 173--378. now Publishers Inc., 2014.

\bibitem{John02}
Peter~T. Johnstone.
\newblock {\em Sketches of an Elephant. A Topos Theory Compendium. Volume 1}.
\newblock Clarendon Press, Oxford, 2002.

\bibitem{Lee13}
John~M.\ Lee.
\newblock {\em Introduction to {S}mooth {M}anifolds}.
\newblock Springer Verlag, New York, 2nd edition, 2013.

\bibitem{LibeMarl87}
Paulette Libermann and Charles-Michel Marle.
\newblock {\em Symplectic Geometry and Analytical Mechanics}.
\newblock Mathematics and Its Applications. D. Reidel Publishing Company,
  Dordrecht, Boston, Lancaster, Tokyo, 1987.

\bibitem{MaLaMoer92}
Saunders {Mac Lane} and Ieke Moerdijk.
\newblock {\em Sheaves in Geometry and Logic. A First Introduction to Topos
  Theory}.
\newblock Springer-Verlag, New York, 1992.

\bibitem{Merk09}
Jochen Merker.
\newblock On the {G}eometric {S}tructure of {H}amiltonian {S}ystems with
  {P}orts.
\newblock {\em Journal of Nonlinear Science}, 19:717--738, 2009.

\bibitem{Pala68}
Richard~S. Palais.
\newblock {\em Foundations of global non-linear analysis}.
\newblock W.~A.~Benjamin, inc., New York, Amsterdam, 1968.

\bibitem{Polderman}
Jan~Willem Polderman and Jan~C. Willems.
\newblock {\em Introduction to Mathematical Systems Theory}.
\newblock Springer Verlag, London, Heidelberg, New York, 1998.

\bibitem{Reis25b}
Timo Reis.
\newblock Weak solutions of port-{H}amiltonian systems, 2025.

\bibitem{SchuSPivVasi20}
Patrick Schultz, David~I. Spivak, and Christina Vasilakopoulou.
\newblock Dynamical {S}ystems and {S}heaves.
\newblock {\em Applied Categorical Structures}, 28:1--57, 2020.

\bibitem{Vyso20}
Jan Vysoký.
\newblock Hitchhiker's guide to {C}ourant algebroid relations.
\newblock {\em Journal of Geometry and Physics}, 151:103635, 2020.

\end{thebibliography}
\end{document}